\documentclass
{amsart}

\usepackage{amsmath,amssymb,amsthm} 
\usepackage[all]{xy}
\usepackage{extarrows}
\usepackage{graphicx}
\usepackage{bm}
\usepackage{comment, url}
\usepackage[normalem]{ulem}

\usepackage{time}


\newtheorem{thm}{Theorem}[section]
\newtheorem{lem}[thm]{Lemma}

\newtheorem{prop}[thm]{Proposition}  

\theoremstyle{remark}

\theoremstyle{definition}

\newtheorem{rem}[thm]{Remark} 

\newtheorem{eg}[thm]{Examples}

\newtheorem{def/prop}[thm]{Definition/Proposition}

\numberwithin{equation}{section}


\usepackage[usenames]{color}

\newcommand{\red}{}

\def\det{\mathop{\mathrm{det}}\nolimits}

\def\Ker{\mathop{\mathrm{Ker}}\nolimits}

\def\Spec{\mathop{\mathrm{Spec}}\nolimits}

\def\mod{\mathop{\mathrm{mod}}\nolimits}

\newcommand{\bb}[1]{{\mathbb{#1}}}
\newcommand{\mca}[1]{{\mathcal{#1}}}

\newcommand{\To}{\longrightarrow}

\newcommand{\inj}{\hookrightarrow}
\newcommand{\surj}{\twoheadrightarrow}

\newcommand{\congto}{\overset{\cong}{\to}}

\newcommand{\N}{\bb{N}}
\newcommand{\Z}{\bb{Z}}
\newcommand{\Zp}{\bb{Z}_{p}}
\newcommand{\Q}{\bb{Q}}
\newcommand{\Qp}{\bb{Q}_{p}}
\newcommand{\R}{\bb{R}}

\newcommand{\Cp}{\bb{C}_{p}}

\newcommand{\ol}{\overline}

\newcommand{\ds}{\displaystyle}

\newcommand{\wh}[1]{{\widehat{#1}}}

\DeclareMathOperator*{\restprod}%
 {\mathchoice{\ooalign{\ensuremath{\displaystyle\prod}\crcr\ensuremath{\displaystyle\coprod}}}%
             {\ooalign{\ensuremath{\textstyle\prod}\crcr\ensuremath{\textstyle\coprod}}}%
             {\ooalign{\ensuremath{\scriptstyle\prod}\crcr\ensuremath{\scriptstyle\coprod}}}%
             {\ooalign{\ensuremath{\scriptscriptstyle\prod}\crcr\ensuremath{\scriptscriptstyle\coprod}}}%
 }

\newcommand{\pmx}[1]{\begin{pmatrix}#1\end{pmatrix}}
\newcommand{\spmx}[1]{{\small \pmx{#1}}}





\title[Profinite completions determine Alexander polynomials]
{The profinite completions of knot groups determine the Alexander polynomials}

\author{Jun Ueki}
\date{\today}

\subjclass[2010]{Primary 57M27,  Secondary 20E18, 20E26, 57M12.}
\keywords{profinite completion, profinite group ring, knot, branched covering.}

\begin{document}
\maketitle

\begin{abstract} 
We study several properties of the completed group ring $\wh{\Z}[[t^{\wh{\Z}}]]$ and the completed Alexander modules of knots. 
Then we prove that if the profinite completions of the groups of two knots $J$ and $K$ are isomorphic, then their Alexander polynomials $\Delta_J(t)$ and $\Delta_K(t)$ coincide. 
\end{abstract}

\tableofcontents 

\section{Introduction} 
It is experimentally known in several occasions that in order to distinguish two knots it is efficient to compare homology torsions of their finite covers  (e.g., \cite{Perko1974proc}, \cite{KodamaSakuma1992}). 
Since homology torsions of finite covers are described by the profinite completions of knot groups (see Remark \ref{homologytorsion}), it is an interesting question to ask what topological properties of knots are determined by the profinite completions of knot groups; 
in other words, what the inverse systems of finite quotients of knot groups know. 
In this article, we prove that the profinite completions of knot groups completely determine the Alexander polynomials of knots, in the sense of Theorem 1.1.

3-manifold groups $\pi$ are \emph{residually finite}, namely, canonically inject into their profinite completions $\wh{\pi}$ by results of Hempel and Perelman (\cite{Hempel1987}, \cite{PerelmanGC1}, \cite{PerelmanGC2}, \cite{PerelmanGC3}). 
Grothendieck wrote that it is an interesting question whether finitely generated (finitely presented) residually finite groups are determined by their profinite completions (\cite{Grothendieck1970profinite}), while negative examples of finitely presented groups were given by Bridson--Grunewald (\cite{BridsonGrunewald2004}). 
Earlier negative examples of not necessarily finitely presented groups had been given by  Platonov--Tavgen (\cite{PlatonovTavgen1986}). 

What topological properties are determined by $\wh{\pi}$'s is a very subtle problem and yet to be understood completely. 
More detailed background and related topics will be described in Section \ref{ssPre}.  

Let us recall prior results to our main result. 
Bridson and Reid distinguished $\wh{\pi}$ of the figure-eight knot from those of other 3-manifolds (\cite{BridsonReid2015}). 
Hence the Alexander polynomial of the figure-eight knot is determined by its $\wh{\pi}$. 
Boileau and Friedl proved among other statements that the Alexander polynomial of a knot is determined by $\wh{\pi}$ if it does not vanish at any root of unity  (\cite[Proposition 4.10]{BoileauFriedl2015}). 
They used Fox's formula for $\Z/n\Z$-covers (\cite{FoxFDC3}) and applied Fried's proposition (\cite{Fried1988}). We generalize their results by removing any assumption on knots. The following theorem makes precise what we mean by the statement in the title of this article that the profinite completions of knot groups determine the Alexander polynomials: 

\begin{thm} \label{theorem}
Let $J$ and $K$ be knots in $S^3$ and suppose that an isomorphism $\varphi: \wh{\pi}_1(S^3-J) \congto \wh{\pi}_1(S^3-K)$ between the profinite completions of their knot groups is given. 
Then their Alexander polynomials $\Delta_J(t)$ and $\Delta_K(t)$ coincide up to multiplication by a unit of $\Z[t^{\Z}]$. 
\end{thm}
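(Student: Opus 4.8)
The plan is to show that $\wh\pi:=\wh\pi_1(S^3-K)$ determines the isomorphism type of the Alexander module over $\Lambda:=\Z[t^{\Z}]$, up to the substitution $t\mapsto t^{u}$ for a profinite unit $u$, and then to recover $\Delta_K(t)$ up to units of $\Lambda$ from this data by means of the structural results on $\wh{\Z}[[t^{\wh{\Z}}]]$ established earlier in the paper. Recall that if $X_K=S^3-K$ and $\widetilde X_K$ is the infinite cyclic cover, then with $A_K:=\ker(\pi_1X_K\to\Z)$ the module $H_1(\widetilde X_K;\Z)=A_K/[A_K,A_K]$ is a finitely presented $\Lambda$-module whose zeroth Fitting ideal is $(\Delta_K(t))$; no peripheral data is needed, only the abelianization $\pi_1X_K\to\Z$.

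First I would pass to abelianizations. Since profinite completion commutes with abelianization, $\varphi$ fits into a commutative square with the canonical surjections $\wh\pi_1X_J,\wh\pi_1X_K\to\wh{\Z}$, the bottom arrow being multiplication by some $u\in\wh{\Z}^{\times}=\operatorname{Aut}(\wh{\Z})$. Hence $\varphi$ carries $N_J:=\ker(\wh\pi_1X_J\to\wh{\Z})$ isomorphically onto $N_K:=\ker(\wh\pi_1X_K\to\wh{\Z})$, and on the continuous abelianizations $\wh M_K:=N_K^{\mathrm{ab}}$, which are modules over the completed group ring $\wh\Lambda:=\wh{\Z}[[t^{\wh{\Z}}]]$, it induces a $\sigma_u$-semilinear isomorphism $\wh M_J\cong\wh M_K$, where $\sigma_u$ is the ring automorphism $t^{a}\mapsto t^{ua}$ of $\wh\Lambda$. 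Equivalently $\wh M_J\cong\wh M_K\otimes_{\wh\Lambda,\sigma_u}\wh\Lambda$, so that $\operatorname{Fitt}_0(\wh M_J)=\sigma_u\bigl(\operatorname{Fitt}_0(\wh M_K)\bigr)$ in $\wh\Lambda$.

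Next I would identify $\wh M_K$ with the \emph{completed Alexander module}, i.e. with the completion of $H_1(\widetilde X_K;\Z)\otimes_{\Lambda}\wh\Lambda$, so that, Fitting ideals commuting with base change, $\operatorname{Fitt}_0(\wh M_K)=\Delta_K(t)\wh\Lambda$ and therefore $\Delta_J(t)\wh\Lambda=\sigma_u(\Delta_K(t))\wh\Lambda$. Finally, using the properties of $\wh{\Z}[[t^{\wh{\Z}}]]$ from the first part of the paper — injectivity of $\Lambda\to\wh\Lambda$, the description of its unit group, and the behaviour of the cyclotomic versus non-cyclotomic part of a principal ideal generated by an element of $\Lambda$ — I would show that two Alexander polynomials generating ideals of $\wh\Lambda$ related by some $\sigma_u$ must be associates in $\Lambda$; here the symmetry $\Delta_K(t^{-1})\doteq\Delta_K(t)$ and the integrality of the coefficients are used to absorb the twist $u$, together with a Fried-type input on the non-cyclotomic part — now with no hypothesis on vanishing at roots of unity, since $\wh M_K$ itself retains the cyclotomic factors.

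The main obstacle is the identification of $N_K^{\mathrm{ab}}$ with the completion of the classical Alexander module: $A_K$ is typically infinitely generated (precisely when $\Delta_K$ is not monic), so profinite completion does not obviously commute with the extension $1\to A_K\to\pi_1X_K\to\Z\to1$, and one must show that the profinite topology of $\pi_1X_K$ restricts to the correct topology on $A_K$ and that the continuous abelianization of the closure $\overline{A_K}\subseteq\wh\pi$ recovers $H_1(\widetilde X_K;\Z)\otimes\wh{\Z}$ — this is exactly what the preparatory study of $\wh{\Z}[[t^{\wh{\Z}}]]$ and of completed Alexander modules is for. A secondary but genuine difficulty is the unit/twist bookkeeping in the last step, which again rests on a sufficiently precise understanding of the arithmetic of the completed group ring.
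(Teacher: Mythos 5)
Your overall architecture is the same as the paper's: produce an equality of Fitting ideals $(\Delta_J(t^v))=(\Delta_K(t))$ in $\wh{\Z}[[t^{\wh{\Z}}]]$ for some $v\in\wh{\Z}^{\times}$, strip the common cyclotomic factors using the arithmetic of the completed group ring (that $\Phi_m(t)$ is not a zero-divisor, Lemma~\ref{not-ann}, and that $\Phi_m(t^v)/\Phi_m(t)$ is a unit, Lemma~\ref{unit}), and finish on the cyclotomic-free part with Weber's generalization of Fox's formula (Proposition~\ref{Weber}) and Fried's theorem on cyclic resultants of reciprocal polynomials (Proposition~\ref{Fried}). That algebraic endgame, including the observation that no hypothesis at roots of unity is needed because the completed module retains the cyclotomic factors, is exactly Lemmas~\ref{Phi} and~\ref{polynomials}.

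The gap is precisely where you flag it, and it is not discharged by the preparatory material as you hope. You route the topological input through $N_K=\ker(\wh{\pi}_1(S^3-K)\to\wh{\Z})$ and its continuous abelianization, and then need $N_K^{\mathrm{ab}}$ to be (up to the twist) the base change of the classical Alexander module so that its Fitting ideal is $(\Delta_K)$. But $N_K$ has infinite index, so Lemma~\ref{hatsubgp} does not apply; $N_K=\bigcap_n\ker(\wh{\pi}\to\Z/n\Z)$ is a decreasing intersection of closed subgroups, the continuous abelianization of such an intersection need not be the limit of the abelianizations, $A_K$ need not be finitely generated (exactly when $\Delta_K$ is non-monic), and neither $\ol{A_K}=N_K$ nor $N_K^{\mathrm{ab}}\cong H_1(\wt{X}_K)\otimes_{\Z[t^{\Z}]}\wh{\Z}[[t^{\wh{\Z}}]]$ is established anywhere. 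The paper never confronts this: it works only at finite levels, applying Lemma~\ref{hatsubgp} to the finite-index subgroups $\ker(\wh{\pi}\to\Z/n\Z)$ to get $t^{\Z/n\Z}$-equivariant isomorphisms $\wh{H}_1(M_n)\congto\wh{H}_1(N_n)$ for the branched cyclic covers, computes their Fitting ideals over $\wh{\Z}[t^{\Z/n\Z}]$ as $(\Delta\bmod(t^n-1))$ via the Wang sequence and flatness of $\wh{\Z}$ over $\Z$, and only then passes to the inverse limit over $n$ (with a Mittag--Leffler argument killing $\varprojlim^1$) to land in $\wh{\Z}[[t^{\wh{\Z}}]]$ (Lemma~\ref{ideals}). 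To make your proof correct you should replace the single infinite-level identification by this finite-level argument followed by a limit; as written, the central step is asserted rather than proved.
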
 

The idea of our proof is to improve the argument of \cite{BoileauFriedl2015}. 
We consider not just the orders of groups on each layer, but also an isomorphism between the completed Alexander modules over the completed group ring $\wh{\Z}[[t^\wh{\Z}]]$, and obtain the equality of the Fitting ideals. 
In Section \ref{ssAlg}, we study several properties of $\wh{\Z}[[t^\wh{\Z}]]$, which would be useful also in studies of $\wh{\Z}$-covers of links or $\wh{\Z}$-extensions of number fields (e.g., \cite{Ueki4}, \cite{Asada2008}). 
We especially prove that \emph{in the completed group ring $\wh{\Z}[[t^\wh{\Z}]]$, any element $0\neq f(t)\in \Z[t]$ is not a zero-divisor} (Lemma \ref{not-ann}).  
In Section \ref{ssTop}, we consider inverse systems of branched $\Z/n\Z$-covers of knots and obtain an equality of ideals in $\wh{\Z}[[t^\wh{\Z}]]$. In addition, we define and study the completed Alexander modules of knots. 
In Section \ref{ssPrf}, we prove our theorem. 

In this article, we denote the profinite integer ring $\varprojlim_n \Z/n\Z$ by $\wh{\Z}$ and the $p$-adic integer ring $\varprojlim_n \Z/p^n\Z$ by $\Zp$ for each prime number $p$. 

\section{Preliminaries} \label{ssPre} 

In order to put our work in the context, we survey some general background and related works, together with some future sight. 
We will not make use of them in the paper, other than the definition of profinite completion.
A basic literature of profinite groups is \cite{RibesZalesskii2010}.\\ 

\emph{The profinite completion} $\wh{\pi}$ of a discrete group $\pi$ is a topological group defined by $\varprojlim_\Gamma \pi/\Gamma$, where $\Gamma$ runs through all the normal subgroups of finite index, and endowed with the weakest topology such that the kernel $\ker (\wh{\pi}\surj \pi/\Gamma)$ of the natural projection is open for every $\Gamma$. 

A group $\pi$ is said to be \emph{residually finite} if each nontrivial $g\in \pi$ has a finite quotient of $\pi$ in which the image of $g$ is nontrivial. This condition is equivalent to that the canonical homomorphism $\pi\to \wh{\pi}$ is an injection. 

A residually finite group $\pi$ is said to be \emph{Grothendieck rigid} if none of its finitely generated proper subgroups $\Gamma<\pi$ induces an isomorphism $\wh{\Gamma}\congto \wh{\pi}$ on their profinite completions (\cite{LongReid2011G}). 
Grothendieck especially wrote that it is an interesting question whether 
every finitely presented residually finite group would satisfy this condition (\cite{Grothendieck1970profinite}), 
while negative examples were given by Bridson--Grunewald (\cite{BridsonGrunewald2004}). Thus ``$\wh{\pi}$ forgets about $\pi$ to some extent.''\\ 

By a result of Hempel (\cite{Hempel1987}) together with Perelman's solution to the geometrization conjecture (\cite{PerelmanGC1}, \cite{PerelmanGC2}, \cite{PerelmanGC3}), 
the fundamental group of any compact 3-manifold is residually finite. 
By Long--Reid (\cite{LongReid2011G}), the fundamental group of any closed geometric 3-manifold is Grothendieck rigid. 
In addition, recently Boileau and Friedl proved that the fundamental groups of compact, orientable, irreducible 3-manifolds with toroidal boundaries are Grothendieck rigid (\cite{BoileauFriedl2017}). 
However, it seems still unknown whether profinite completions of groups of distinct two knots are never isomorphic to each other. 

Now we focus on the question of what topological properties the profinite completions $\wh{\pi}$ of 3-manifolds groups know. 
Note that in this article, if we write that \emph{$\wh{\pi}$ determines the property $P$}, then it means the following statement: 
\emph{Suppose that $M$ and $N$ are 3-manifolds with $\wh{\pi}_1(M)\cong \wh{\pi}_1(N)$. Then $M$ satisfies the property $P$ if and only if so \red{does} $N$.}  
(In another context, it might mean instead that \emph{we can explicitly describe whether $M$ satisfies the property $P$ or not by using $\wh{\pi}_1(M)$}.)  

By Wilton--Zalesskii (\cite{WiltonZalesskii2014}), $\wh{\pi}$ of a closed 3-manifold $M$ determines whether $M$ is hyperbolic, and whether it is Seifert fibered. 
By Funar (\cite{Funar2013}) and Hempel (\cite{Hempel2014}), there are pairs of torus bundles and those of Seifert 3-manifolds whose fundamental groups are not isomorphic but whose $\wh{\pi}$'s are isomorphic, 
while the existence of such a pair of hyperbolic 3-manifolds is still unknown. 
Other recent progresses are due to Wilton--Zalesskii and Wilkes (\cite{WiltonZalesskii2010}, \cite{WiltonZalesskii-arXiv1703}, \cite{Wilkes2017Dedicata}, \cite{Wilkes2018JA}, \cite{Wilkes-arXiv1710}, \cite{Wilkes-arXiv1801}, \cite{Wilkes-arXiv1802}). 

In regard to knot exteriors, Bridson--Reid distinguished $\wh{\pi}$ of the figure-eight knot from those of other 3-manifolds (\cite{BridsonReid2015}). 
Boileau--Friedl distinguished $\wh{\pi}$ of each torus knot and the figure-eight knot from those of other knots (\cite{BoileauFriedl2015}). 
In addition, Bridson--Reid--Wilton proved for compact 3-manifolds with 1st Betti number 1 that fiberedness is determined by $\wh{\pi}$'s (\cite{BridsonReidWilton2017}), \red{and Jaikin-Zapirain removed the condition on Betti number (\cite{JaikinZapirain2017fibering}).}

As for an explicit description by using $\wh{\pi}$ of the Alexander polynomial of a knot, we have Hillar's study \cite{Hillar2005} so that we have an algorithm to recover a polynomial without root on roots of unity from its cyclic resultants after knowing its degree. 
After our study in this article, it will still remain for instance to study how to recover the Alexander polynomial of a knot $K$ without any assumption from the family of groups $\{\wh{H}_1(X_n)\}_n$ associated to the cyclic covers $\{X_n\to X\}_n$ over the knot exterior $X=S^3-K$. \\ 

An important application of profinite (pro-sol\red{vable}) completions of fundamental groups is the work of Friedl--Vidussi (\cite{FriedlVidussi2011}). They used pro-solvable completions to prove that twisted Alexander polynomials determine the fiberedness of 3-manifolds. 
Another application can be found in a study of $PD(3)$-groups by Boileau--Hillman (\cite{BoileauHillman-arXiv1710}).\\ 

Finally we would like to give a remark on \red{the} analogy between knots and prime numbers. 
It was initially pointed out by Mazur (\cite{Mazur1963}) that there is a close relation between Iwasawa theory on $\Zp$-extensions of number fields and Alexander--Fox theory on systems of cyclic covers over knot exteriors. 
After years, Kapranov, Reznikov, and Morishita described the analogy between low dimensional topology and number theory in a systematic manner, and their study is called \emph{arithmetic topology} (cf. \cite{Morishita2012}). 

One of the basic analogies is obserbed between the fundamental group $\pi_1(M)$ of a 3-manifold $M$ and the \'etale fundamental group $\pi_1^{\text{\'et}}(\Spec \mathcal{O}_k)$ of the integer ring $\mathcal{O}_k$ of a number field $k$, where the latter is a profinite group \emph{a priori}. 
\red{Therefore, the study of profinite rigidity of 3-manifold groups would give a new angle in arithmetic topology, as mentioned by Mazur in \cite[page 6]{Mazur2012}.}  

\red{We can expect further progresses in this direction. As for Alexander--Fox theory, 
twisted Alexander invariants of knots associated to certain profinite representations 
are investigated from a viewpoint of Hida--Mazur theory and Galois deformation theory (\cite{MTTU}, \cite{KMTT2017}), besides an analogue of Fox' formula for twisted Alexander polynomial is given by Tange (\cite{TangeRyoto1}). 
In addition, we have a remarkable theorem by Le (the Bergeron--Venkatesh conjecture) on the asymptotic formula of homology torsion growth in which hyperbolic volume appears (e.g., \cite{BV2013}, \cite{Le-vol}), while L\"uck's ``optimistic conjecture'' on $L^2$-torsion would imply that hyperbolic volume is determined by $\wh{\pi}$ (\cite{Luck2015slide}). }

\red{Moreover,} in \emph{anabelian geometry}, Mochizuki introduced the terms ``\emph{mono/bi-anabelian}'' in order to distinguish formulations in reconstruction problems for arithmetic fundamental groups (cf. \cite[Remarks 3.7.3, 3.7.5]{MochizukiTAAG3}). 
It 
\red{would be} interesting to examine how to formulate answers to our question of \emph{what $\wh{\pi}$ knows} in comparison with his point of view.

\section{Algebraic lemmas} \label{ssAlg}
To begin with, we recall two assertions which will be used in this section. 
A polynomial $f(t)=\sum_{0\leq i \leq d} a_it^{d-i}$ in $\Z[t]$ with $d=\deg f(t)$ 
is said to be \emph{reciprocal} if $a_i=a_{d-i}$ holds for every $i$. Such a polynomial is also said to be \emph{self-reciprocal} or \emph{palindromic}. 
For two polynomials $f(t)=\sum_{0\leq i \leq d} a_it^{d-i}$ and $g(t)=\sum_{0\leq j \leq e} b_jt^{e-j}$ in $\Z[t]$ with $d=\deg f(t)$ and $e=\deg g(t)$, their resultant $R(f(t),g(t))\in \Z$ is defined as the determinant of the Sylvester matrix 
$${\rm Syl}(f(x),g(x))=\spmx{a_0&a_1&\cdots&a_m&\\
&\ddots&\ddots&&\ddots&\\
&&a_0&a_1&\cdots&a_m\\
b_0&b_1&\cdots&b_n&\\
&\ddots&\ddots&&\ddots&\\
&&b_0&b_1&\cdots&b_n} \in {\rm M}_{d+e}(\Z),$$ 
whose entries are given by their coefficients $a_i$ and $b_i$. 
We have $R(f(t),g(t))=a_0^eb_0^d\prod_{i,j} (\alpha_i- \beta_j)$, where $\alpha_i$ and $\beta_j$ runs through roots of $f(t)$ and $g(t)$ in an algebraic closure $\ol{\Q}$ of $\Q$ (cf.~\cite{Weber1979}). 

Fried's proposition is stated as follows:  

\begin{prop}[Fried, {\cite[Proposition]{Fried1988}}] \label{Fried} 
Let $f(t)$ be a reciprocal polynomial in $\Z[t]$ and let $R(f(t),t^n-1)$ denote the resultant of $f(t)$ and $t^n-1$ for each $n\in \N$. If $R(f(t),t^n-1)\neq 0$ holds for every $n\in \N_{>0}$, then the sequence $\{|R(f(t),t^n-1)|\}_n$ determines $f(t)$.
\end{prop}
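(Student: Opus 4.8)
The plan is to first reconstruct $f$ from the integer sequence $c_n:=R(f(t),t^n-1)$, and only afterwards to deal with the loss of sign in passing to $|c_n|$. Write $f(t)=a_0\prod_{i=1}^{d}(t-\alpha_i)$ over $\ol{\Q}$ with $d=\deg f$; by the product formula for the resultant recalled above, $c_n=a_0^{\,n}\prod_{i=1}^{d}(\alpha_i^{\,n}-1)$, so $|c_n|=|a_0|^{\,n}\prod_i|\alpha_i^{\,n}-1|$. I would begin by noting two consequences of the hypothesis $c_n\neq0$ for all $n$: no $\alpha_i$ is a root of unity, and $\deg f$ is even (a reciprocal polynomial of odd degree has $-1$ as a root, which would make $c_2=0$). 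Hence the roots split into $d/2$ pairs $\{\alpha,\alpha^{-1}\}$, into conjugate pairs, and $\prod_i\alpha_i=(-1)^d=1$.

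The key computation is the following. Expanding the product,
$$c_n=\sum_{S\subseteq\{1,\dots,d\}}(-1)^{\,d-|S|}\mu_S^{\,n},\qquad \mu_S:=a_0\prod_{i\in S}\alpha_i\ \ (\text{so }\mu_\emptyset=a_0),$$
so $(c_n)_n$ satisfies an integral linear recurrence and the zeta function $\zeta(z):=\exp\big(\sum_{n\ge1}\tfrac{c_n}{n}z^n\big)$ is the rational function $\prod_{S}(1-\mu_Sz)^{-(-1)^{d-|S|}}$. Thus $\{c_n\}_n$ determines $\zeta(z)$, hence the multiset of those products $\mu_S$ that survive mutual cancellation, each carrying a sign $\pm1$. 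To recover $f$ from this it would suffice to isolate the level-$0$ value $\mu_\emptyset=a_0$ and the level-$1$ values $\mu_{\{i\}}=a_0\alpha_i$, since dividing the latter by the former returns the $\alpha_i$, hence $f=a_0\prod_i(t-\alpha_i)$.

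This is the step where reciprocity must be used. Because $d$ is even and $\prod_i\alpha_i=1$, the involution $S\mapsto\{1,\dots,d\}\setminus S$ sends $\mu_S$ to $a_0^{\,2}/\mu_S$ and fixes the exponent $-(-1)^{d-|S|}$, so the zero/pole constellation of $\zeta(z)$ is invariant under $w\mapsto a_0^{\,2}/w$. I would combine this symmetry with the requirement that the surviving data fit together into a single reciprocal polynomial over $\Z$ to argue that the level-$0$ and level-$1$ quantities are never annihilated by cancellation and can be told apart from the higher products $\mu_S$ (this is close to the reconstruction statements for cyclic resultants mentioned in the introduction, but specialized to, and simpler for, reciprocal $f$; one version says a polynomial $g$ with $g(0)\neq0$ is determined by its signed cyclic resultants up to $g\mapsto\pm g$ and $g\mapsto\pm t^{\deg g}g(1/t)$, the second being trivial on $f$). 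Finally, to go from $\{c_n\}$ to $\{|c_n|\}$, I would use that $c_n\in\Z$ and group the factors $\alpha_i^n-1$ by root type — non-real conjugate pairs contribute $|\alpha^n-1|^2>0$, positive-real pairs $\{\alpha,\alpha^{-1}\}$ contribute $-(\alpha^n-1)^2/\alpha^n<0$, negative-real pairs contribute a sign depending only on $n\bmod2$ — so $\mathrm{sgn}(c_n)$ depends only on the parity of $n$; thus $\{|c_n|\}$ leaves at most four candidate sequences $\{c_n\}$, and running the reconstruction on each (discarding those realized by no reciprocal polynomial) yields $f$ up to a unit of $\Z[t]$, i.e.\ up to sign.

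The hard part will be controlling the cancellation among coincident $\mu_S$ in $\zeta(z)$: these ``vanishing cycles'' are exactly what keeps cyclic resultants from pinning down an arbitrary polynomial, and the work is to show that the reciprocal and non-cyclotomic hypotheses forbid any cancellation that would destroy the level-$0$ or level-$1$ data, and to recognize that data among the survivors. By contrast the zeta-function identity and the parity analysis of signs are routine.
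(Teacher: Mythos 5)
The paper does not actually prove this proposition: it is quoted from \cite{Fried1988}, and the text only records that Fried's proof studies the Artin--Mazur zeta function of the sequence $|R(f(t),t^n-1)|$ and that the hypothesis $f\in\R[t]$ is essential there. So your attempt can only be measured against that one-sentence description, and your framework is indeed the same one: the preliminary reductions are all correct and correctly argued (no root of $f$ is a root of unity; $\deg f$ is even since a reciprocal polynomial of odd degree vanishes at $-1$; $\prod_i\alpha_i=1$; the expansion $c_n=\sum_S(-1)^{d-|S|}\mu_S^n$ and the resulting rationality of $\exp(\sum_n c_nz^n/n)$; the involution $S\mapsto S^c$; the fact that $\mathrm{sgn}(c_n)$ depends only on $n\bmod 2$).

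Nevertheless there is a genuine gap, and it sits exactly where you yourself place "the hard part": the claim that the surviving zero/pole divisor of $\zeta(z)$ still exhibits $\mu_\emptyset=a_0$ and the level-one values $a_0\alpha_i$ is the entire content of the proposition, and no argument is given for it. Concretely: (i) the level-$0$ and level-$1$ factors can be cancelled or contaminated by higher subsets $S$ of the opposite parity whenever the roots satisfy multiplicative relations such as $\prod_{i\in S}\alpha_i=1$ or $=\alpha_j$, and nothing in the reciprocity hypothesis obviously forbids such relations; (ii) even without cancellation you give no procedure to recognize which surviving values are the $a_0\alpha_i$ rather than products $a_0\alpha_{i_1}\cdots\alpha_{i_r}$ --- note that the surviving value of largest modulus is $|a_0|\prod_{|\alpha_i|>1}|\alpha_i|$ (a Mahler measure), not $\max_i|a_0\alpha_i|$, so extremality does not isolate the level-$1$ data, and multiplicity patterns (already for $d=4$ the level-$1$ zeros occur with multiplicity $2$ and the pole at $1/a_0$ with multiplicity $4$) degenerate precisely when values coincide; (iii) the appeal to "reconstruction statements for cyclic resultants" is in effect an appeal to Hillar's theorem, which is a later strengthening of the statement being proved, so it cannot be used as a black box here. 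Finally, the passage from $\{|c_n|\}$ back to $\{c_n\}$ needs one more argument you do not supply: if two of the (at most four) parity-periodic sign patterns are each realized by \emph{some} reciprocal polynomial, "discarding unrealizable patterns" does not show the two resulting polynomials agree up to a unit. Until the cancellation/identification step is actually carried out (Fried does this over $\R$ by pairing each $\lambda$ with both $\bar\lambda$ and $\lambda^{-1}$ so as to rewrite $|c_n|$ itself as a signed sum of $n$-th powers), the outline is a plan rather than a proof.
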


\red{We remark that Proposition \ref{Fried} was originally stated for $f(t) \in \R[t]$ and in Fried's proof the condition that the coefficients are in $\R$ was essential.}  
The proposition was proved by studying the zeta function $\ds B(z)=\red{\sum_{n=1}^{\infty}}\, |R(f(t),t^n-1)|\, \frac{z^n}{n}$ of dynamical system, which was introduced by Artin and Mazur (\cite{ArtinMazur1965Annals}). We cannot remove the assumption $R(f(t),t^n-1)\neq 0$ (e.g. Fried's pair in Example \ref{example}), while Hillar (\cite{Hillar2005}) proved without this assumption that $B(z)$ is a rational function. 
A recent development on \red{such kinds of} zeta functions is due to Br\"aunling (\cite{Braunling2017}). 

The following proposition is an algebraic generalization of Fox's formula: 

\begin{prop}[Weber, {\cite{Weber1979}}] \label{Weber} 
Let $f(t)$ and $g(t)$ be non-zero polynomials in $\Z[t]$ and suppose that the highest coefficient and the constant term of $g(t)$ are equal to $\pm1$. 
If $f(t)$ and $g(t)$ has no common root in an algebraic closure $\ol{\Q}$ of $\Q$, then 
$\Z[t]/(f(t),g(t))$ is a finite group with order $|R(f(t),g(t))|$. 
\end{prop}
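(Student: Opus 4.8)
The plan is to present $\Z[t]/(f(t),g(t))$ as the cokernel of an explicit square integer matrix and then to recognize the absolute value of its determinant as the resultant.

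First I would normalize: replacing $g(t)$ by $-g(t)$ if necessary, we may assume that $g(t)$ is \emph{monic} of degree $e=\deg g(t)$, which alters neither the ideal $(f(t),g(t))$ nor the integer $|R(f(t),g(t))|$. Since $g(t)$ is now monic with integer coefficients, $\Z[t]/(g(t))$ is a free $\Z$-module of rank $e$ with basis $1,t,\dots,t^{e-1}$, and multiplication by $t$ on it is given by the companion matrix $C\in\mathrm{M}_e(\Z)$ of $g(t)$. The image of the ideal $(f(t),g(t))$ in $\Z[t]/(g(t))$ is the principal ideal generated by the class of $f(t)$, which under the identification $\Z[t]/(g(t))\cong\Z^e$ is exactly the image of the $\Z$-linear map $f(C)\colon\Z^e\to\Z^e$. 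Hence $\Z[t]/(f(t),g(t))\cong\Coker\bigl(f(C)\colon\Z^e\to\Z^e\bigr)$, with $f(C)\in\mathrm{M}_e(\Z)$.

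Next, by the Smith normal form over the PID $\Z$, this cokernel is finite if and only if $\det f(C)\neq 0$, in which case its order equals $|\det f(C)|$; so it remains to compute $\det f(C)$ and compare it with $R(f,g)$. The characteristic polynomial of $C$ is $g(t)$, so the eigenvalues of $C$ in $\ol{\Q}$ are precisely the roots $\beta_1,\dots,\beta_e$ of $g(t)$ counted with multiplicity; therefore the eigenvalues of $f(C)$ are $f(\beta_1),\dots,f(\beta_e)$ and $\det f(C)=\prod_{j=1}^{e}f(\beta_j)$. Writing $f(t)=a_0\prod_{i=1}^{d}(t-\alpha_i)$ with $a_0$ the leading coefficient of $f$ gives $\det f(C)=a_0^{e}\prod_{i,j}(\beta_j-\alpha_i)$. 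Comparing with the product formula $R(f(t),g(t))=a_0^{e}\cdot 1^{d}\cdot\prod_{i,j}(\alpha_i-\beta_j)$ recalled above (the leading coefficient of the normalized $g$ being $1$), we obtain $\det f(C)=(-1)^{de}R(f(t),g(t))$, hence $|\det f(C)|=|R(f(t),g(t))|$.

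Finally, the hypothesis that $f(t)$ and $g(t)$ have no common root in $\ol{\Q}$ means that no $\beta_j$ is a root of $f(t)$, so $f(\beta_j)\neq 0$ for every $j$ and $\det f(C)=\prod_j f(\beta_j)\neq 0$. Thus $\Coker(f(C))$ is finite of order $|\det f(C)|=|R(f(t),g(t))|$, as claimed. I do not expect a serious obstacle here; the only point requiring care is the identification of ``multiplication by $f(t)$'' with the integer matrix $f(C)$, which is exactly where the hypothesis that the leading coefficient of $g(t)$ equals $\pm1$ enters (the condition on the constant term of $g(t)$ is not needed for this statement over $\Z[t]$, and is relevant only if one wishes to identify $\Z[t]/(f,g)$ with $\Z[t^{\pm1}]/(f,g)$).
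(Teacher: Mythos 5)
Your proof is correct. The paper gives no proof of this proposition (it is quoted from Weber's article), so there is nothing in-paper to compare against; your reduction to $\Coker\bigl(f(C)\bigr)$ for the companion matrix $C$ of the monicized $g(t)$, followed by Smith normal form and the eigenvalue computation $\det f(C)=\prod_j f(\beta_j)=\pm R(f(t),g(t))$, is the standard argument and every step checks out — including your closing observation that only the leading coefficient of $g(t)$ being $\pm1$ is needed for the statement about $\Z[t]/(f(t),g(t))$, the constant-term hypothesis being relevant only when one passes to the Laurent ring $\Z[t^{\Z}]$ as in the paper's application with $g(t)=t^n-1$.
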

 
Note that $R(f(t),t^n-1)=0$ holds if and only if $f(t)$ has a root at a primitive $m$-th root of unity for some $m\in \N$ with $m|n$. 
\emph{The $m$-th cyclotomic polynomial} $\Phi_m(t)\in \Z[t]$ is defined as the minimal polynomial of a primitive $m$-th root of unity over $\Q$. It vanishes at every primitive $m$-th root of unity and satisfies $\prod_{m|n}\Phi_m(t)=t^n-1$. 
Let $\ol{\Q}$ be an algebraic closure of $\Q$ and let $\zeta_n \in \ol{\Q}$ be a primitive $n$-th root of unity for each $n\in \N_{>0}$.

The completed group ring $\wh{\Z}[[t^\wh{\Z}]]$ is defined by $\varprojlim_n \wh{\Z}[t^{\Z/n\Z}]$, which can be identified with $\varprojlim_n \wh{\Z}[t]/(t^n-1)$. 
Since the composite $\wh{\Z}[t^\Z]\inj \wh{\Z}[[t^\wh{\Z}]]\surj \wh{\Z}[t^{\Z/n\Z}]$ is the natural surjection, 
the natural projection $\wh{\Z}[[t^\wh{\Z}]]\surj \wh{\Z}[t^{\Z/n\Z}]$ for each $n\in \N_{>0}$ is a surjection. 
We regard $\Z[t]$ as a subring of $\wh{\Z}[[t^\wh{\Z}]]$. 
We have natural decompositions $\wh{\Z}\cong \prod_p \Zp$ and $\wh{\Z}[[t^\wh{\Z}]]\cong \prod_p \Zp[[t^{\wh{\Z}}]]$. 
Indeed, let $m\in \N$ with the prime decomposition $m=\prod_i p_i^{e_i}$. Then we have $\Z/m\Z \cong \prod_i \Z/p_i^{e_i}\Z$ by the Chinese remainder theorem, and hence the decomposition of coefficients $\Z/m\Z[t^{\Z/n\Z}]\cong \prod_i (\Z/p_i^{e_i}\Z[t^{\Z/n\Z}])$ for each $n$. Since the inverse limit is compatible with product of sets on each layer, we have the desired isomorphisms.  
They are useful because $\Zp$ is an integral domain while $\wh{\Z}$ is not. For each prime number $p$, let $\Cp$ denote the completion of an algebraic closure of the $p$-adic numbers $\Qp$, and fix an embedding $\ol{\Q}\inj \Cp$.

Now we explain that we can substitute roots of unity for elements of $\wh{\Z}[[t^{\wh{\Z}}]]$. 
We have the natural surjection $\mod \Phi_m: \Zp[[t^\wh{\Z}]]\surj \Zp[t]/(\Phi_m(t))$ for each $m$. 
Indeed, for any $n$ with $m|n$, we have a natural map $\Zp[[t^{\wh{\Z}}]]\surj \Zp[t^{\Z/n\Z}]\cong \Zp[t]/(t^n-1)\surj \Zp[t]/(\Phi_m(t))$. Since $\{\Zp[t^{\Z/n\Z}]\}_n$ forms an inverse system, this map is independent of $n$. 

In each $\Zp[t]$, $\Phi_m(t)$ is not necessarily irreducible. 
For instance, if $m|(p-1)$, then $\Zp$ contains primitive $m$-th roots of unity, mainly due to Hensel's lemma (cf. \cite[p.112]{Gouvea-padic-2ed}). 
Let $\phi(t) \in \Zp[t]$ be an irreducible divisor of $\Phi_m(t)$ and $\zeta \in \ol{\Q}$ a root of $\phi(t)$. Then we have a natural isomorphism $\Zp[t]/(\phi(t))\cong \Zp[\zeta]$. 
We denote by $g(\zeta)$ the image of each $g\in \wh{\Z}[[t^{\wh{\Z}}]]$ under the map $\wh{\Z}[[t^{\wh{\Z}}]] \surj \Zp[[t^{\wh{\Z}}]] \surj \Zp[t^{\Z/n\Z}]\surj \Zp[t]/(\Phi_m(t))\surj \Zp[t]/(\phi(t))\cong \Zp[\zeta]$. 

\begin{lem} \label{not-ann} 
In the completed group ring $\wh{\Z}[[t^\wh{\Z}]]$, any element $0\neq f(t)\in \Z[t]$ is not a zero-divisor. 
\end{lem}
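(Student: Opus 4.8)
The plan is to reduce, via the product decomposition $\wh{\Z}[[t^{\wh{\Z}}]]\cong \prod_p \Zp[[t^{\wh{\Z}}]]$, to showing that a nonzero $f(t)\in\Z[t]$ is a non-zero-divisor in each factor $\Zp[[t^{\wh{\Z}}]]$; an element of the product is a zero-divisor only if one of its components is, so it suffices to work one prime at a time. Fix $p$, and suppose $g\in\Zp[[t^{\wh{\Z}}]]$ satisfies $f(t)\cdot g=0$. Writing $g=\varprojlim_n g_n$ with $g_n\in\Zp[t]/(t^n-1)$, the hypothesis says $f(t)\,g_n=0$ in $\Zp[t]/(t^n-1)$ for every $n$.

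The key step is to analyze $\Zp[t]/(t^n-1)$ via the cyclotomic factorization $t^n-1=\prod_{m\mid n}\Phi_m(t)$, and then the further factorization of each $\Phi_m(t)$ into irreducibles in $\Zp[t]$. Since $\Zp$ is a complete DVR with residue field $\F_p$, one can handle the $p$-part and prime-to-$p$ part of $n$ separately: when $p\nmid n$, the polynomial $t^n-1$ is separable mod $p$, so by Hensel's lemma $\Zp[t]/(t^n-1)$ is a finite product of the unramified extensions $\Zp[t]/(\phi(t))\cong\Zp[\zeta]$, each of which is an integral domain; when $n=p^a n'$ with $p\nmid n'$, one gets a product of rings of the form $\Zp[\zeta_{n'}][t]/(\Phi_{p^a}(t))$, and each $\Zp[\zeta_{n'}][\zeta_{p^a}]$ is again a domain (a finite extension of $\Zp$). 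In all cases $\Zp[t]/(t^n-1)$ is a finite product of domains, and projecting the relation $f(t)\,g_n=0$ to each factor $\Zp[\zeta]$ gives $f(\zeta)\,g_n(\zeta)=0$ there. Here is where we use that $f$ has \emph{integer} (hence nonzero-in-$\Zp$) coefficients and $f\neq 0$: $f(t)$ is not the zero polynomial, so $f(\zeta)\neq 0$ in the domain $\Zp[\zeta]$ \emph{unless} $\zeta$ is an actual root of $f$, i.e. unless the irreducible factor $\phi$ divides $f$ in $\Zp[t]$. For the factors in which $f(\zeta)\neq 0$ we conclude $g_n(\zeta)=0$, i.e. $g_n$ dies in that component.

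The remaining obstacle — and the part I expect to require the most care — is the components where $\phi\mid f$, so that $f(\zeta)=0$ and we cannot cancel. The point is that $f$, being a fixed nonzero polynomial in $\Z[t]$ of some degree $d$, has only finitely many roots in $\ol{\Q}$, hence vanishes at only finitely many roots of unity, hence only finitely many cyclotomic polynomials $\Phi_m$ can share a root with $f$; let $N$ be the lcm of those finitely many $m$'s together with $1$. Then for \emph{every} $n$ and every irreducible factor $\phi$ of some $\Phi_m$ with $m\mid n$ but $m\nmid N$, we have $f(\zeta)\neq 0$, so $g_n$ vanishes in that component. This forces $g_n$ to be supported on the (fixed, finite) set of components coming from $m\mid N$; more precisely, $g_n$ lies in the image of $\Zp[t]/(t^{\gcd(n,N)}-1)\to\Zp[t]/(t^n-1)$ — equivalently, the compatible system $(g_n)$ factors through $\varprojlim_n \Zp[t]/(t^{\gcd(n,N)}-1)$, which is just the finite ring $\Zp[t]/(t^N-1)$. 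So $g$ is (the image of) a single element $\bar g\in\Zp[t]/(t^N-1)$ with $f(t)\,\bar g=0$ there. But now choose $n=N\ell$ for a prime $\ell$ with $\gcd(\ell,N)=1$ large enough that $f$ has no root that is a primitive $m$-th root of unity for any $m\mid N\ell$ with $m\nmid N$ — automatic by choice of $N$ — and compare the relation in $\Zp[t]/(t^{N\ell}-1)$: the component of $\bar g$ in each factor $\Zp[\zeta]$ with $\zeta$ a primitive $m$-th root of unity, $m\mid N$, must satisfy $f(\zeta)g(\zeta)=0$; since such $\zeta$ with $f(\zeta)=0$ are exactly those killed already, on the complementary (still nontrivial) factors we again get $g(\zeta)=0$, and running over enough $n$ pins down $\bar g=0$. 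Assembling, $g_n=0$ for all $n$, so $g=0$, completing the proof that $f(t)$ is not a zero-divisor. The delicate bookkeeping is precisely tracking which finitely many cyclotomic components are "bad" and verifying that the good components alone suffice to detect nonvanishing of $g$.
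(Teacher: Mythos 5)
Your reduction to each $\Zp[[t^{\wh{\Z}}]]$ and your treatment of the ``good'' components agree with the paper's Case 1: for an irreducible factor $\phi$ of $t^n-1$ with root $\zeta$ not a root of $f$, the relation $f(\zeta)g_n(\zeta)=0$ in the domain $\Zp[\zeta]$ forces $g_n(\zeta)=0$. (A small imprecision here: when $p\mid n$ the ring $\Zp[t]/(t^n-1)$ is \emph{not} a product of domains --- e.g.\ $\Zp[t]/(t^p-1)$ has no nontrivial idempotents since $t^p-1\equiv (t-1)^p \bmod p$ --- it only \emph{injects} into $\prod_\mu \Zp[\zeta_\mu]$; this is harmless for your purpose but the statement ``in all cases a finite product of domains'' is false.)

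The genuine gap is in the last paragraph, which is exactly where the hard part of the lemma lives, namely the components at roots of unity $\zeta$ with $f(\zeta)=0$. First, the claim that the compatible system $(g_n)_n$ ``factors through $\varprojlim_n \Zp[t]/(t^{\gcd(n,N)}-1)=\Zp[t]/(t^N-1)$'' is unjustified: there is no ring map $\Zp[t]/(t^N-1)\to\Zp[t]/(t^n-1)$, and if you mean that $g_n$ is divisible by $(t^n-1)/(t^{\gcd(n,N)}-1)$, the resulting ``quotients'' are not compatible under the transition maps --- they pick up index factors. Second, the sentence ``such $\zeta$ with $f(\zeta)=0$ are exactly those killed already'' is false: nothing in your argument ever kills those components. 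Test your argument on $f(t)=t-1$: at level $n$ every element $g_n=c_n(1+t+\cdots+t^{n-1})$ with $c_n\in\Zp$ satisfies $(t-1)g_n=0$, and its ``bad'' component is $g_n(1)=nc_n$, which no substitution of roots of unity controls. What forces $g=0$ is the cross-level compatibility: $g_{nk}\mapsto k\,c_{nk}(1+\cdots+t^{n-1})$, so $c_n=k\,c_{nk}$, and taking $k=p^r$ gives $c_n\in p^r\Zp$ for all $r$, hence $c_n=0$. This $p$-adic limit along $n=mp^r$, $r\to\infty$, is precisely the paper's Step 1 of Case 2 (the estimate $|\Psi_{n,m}(\zeta_m)|_p\le |p^r|_p$, followed by the identification $\Ker(\bmod\ \Phi_m)=(\Phi_m)$ and a Nakayama argument), and it is entirely absent from your proposal. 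Without some version of it, the cyclotomic factors of $f$ --- which do occur for Alexander polynomials --- are not handled.
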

\begin{proof} 
It is sufficient to prove the assertion for each irreducible element $f(t)\in \Z[t]$. 
We have $\wh{\Z}[[t^\wh{\Z}]]\cong \prod_p \Zp[[t^\wh{\Z}]]$. We denote the image of elements of $\wh{\Z}[[t^\wh{\Z}]]$ in each $\Zp[[t^\wh{\Z}]]$ by the same letters. 
If $f(t)g=0$ holds for $g\in \wh{\Z}[[t^\wh{\Z}]]$, then we have $f(t)g=0$ in every $\Zp[[t^\wh{\Z}]]$. 
Since $f(t) \neq 0$ in every $\Zp[[t^\wh{\Z}]]$, 
it is sufficient to prove that $f(t)$ is not a zero-divisor in $\Zp[[t^\wh{\Z}]]$ for an arbitrary prime number $p$. \\[-2mm] 

\textbf{Case 1.} Suppose that $f(t)$ is not a cyclotomic polynomial. 
Since $\Zp$ is a unique factorization domain, by Gauss' lemma, so is $\Zp[t]$. 
Let $t^n-1=\prod_\mu \phi_\mu(t)$ denote the prime factorization in $\Zp[t]$ and let $\zeta_\mu$ be a root of $\phi_\mu(t)$ for each $\mu$. 
Since $\cap_\mu (\phi_\mu(t))=0$ in $\Zp[t^{\Z/n\Z}]$, we have a natural injection 
$\Zp[t^{\Z/n\Z}]\inj \prod_\mu \Zp[t]/(\phi_\mu(t))\cong \prod \Zp[\zeta_\mu]$, 
where each component $\Zp[\zeta_\mu]$ is an integral domain, 
and the image of $f(t)$ in $\Zp[\zeta_\mu]$ is given by $f(\zeta_\mu)$. 
Thus $\Zp[t^{\Z/n\Z}]$ injects into the product of integral domains and the image of $f(t)$ in each direct component is not zero. Therefore the image of $f(t)$ in each $\Zp[t^{\Z/n\Z}]$ is not a zero divisor, nor is it in $\Zp[[t^{\wh{\Z}}]]$. \\[-2mm] 

\textbf{Case 2.}  
Next, we prove the assertion for each cyclotomic polynomial $f(t)=\Phi_m(t)$ in three steps. 

\emph{Step 1.}  We prove the inclusion ${\rm Ann}(\Phi_m(t)^k)\subset (\Phi_m(t))$ of ideals in $\Zp[[t^{\wh{\Z}}]]$ for any $k\in \N$, where ${\rm Ann}(\Phi_m(t)^k)$ denotes the annihilator ideal of $\Phi_m(t)^k$. 
If follows from 
i) ${\rm Ann}(\Phi_m(t)^k) \subset \Ker (\mod \Phi_m(t))$ and ii) $\Ker (\mod \Phi_m(t))=(\Phi_m(t))$ proved in the following:

i) Suppose $\Phi_m(t)^kg=0$ for $g\in \Zp[[t^{\wh{\Z}}]]$. 
It is sufficient to prove $g(\xi)=0$ for every primitive $m$-th root of unity. 
We may assume $\xi=\zeta_m$. 
Let $(g_n(t))_n\in \Zp[t]^\N$ with $g=(g_n(t) \mod t^n-1)_n \in \varprojlim \Zp[t^{\Z/n\Z}]$ 
 and consider $r,n\in \N$ with $n=mp^r$. 
Since $\Phi_m(t)g_n(t)\equiv 0 \mod (t^n-1)$, 
$g_n(t)$ is divided by $\Psi_{n,m}(t):=(t^n-1)/\Phi_m(t) \in \Zp[t]$. 
The value at $\zeta_m$, which is the image in $\Zp[\zeta_m]$, 
satisfies $|\Psi_{n,m}(\zeta_m)|_p\leq |n|_p=|p^r|_p$. 
If we put $q_n(t):=g_n(t)/\Psi_{n,m}(t) \in \Zp[t]$, then $|q_n(\zeta_m)|_p\leq 1$ holds. 
Since $g(\zeta_m)=g_n(\zeta_m)$ and $\lim_{r\to \infty}|p^r|_p=0$, we have $g(\zeta_m)=0$. 

ii) The ring $\Zp[[t^{\wh{\Z}}]]$ is a compact Hausdorff topological ring with respect to the topology such that the family $\Ker(\Zp[[t^{\wh{\Z}}]]\surj \Z/p^s\Z[t^{\Z/n\Z}])\}_{s,n\in \N}$ is a fundamental neighborhood system of 0. 
The Kernel of $\mod \Phi_m(t): \Zp[[t^{\wh{\Z}}]] \surj \Zp[t]/(\Phi_m(t))$ is a closed set and contains $(\Phi_m(t))$ as a dense subset. 
Indeed, we have $(\Phi_m(t))=\Ker (\Zp[t^\Z]\inj \Zp[[t^\wh{\Z}]]\surj \Zp[t]/(\Phi_m(t)))$ in $\Zp[t^\Z]$ and the image of $\Zp[t^\Z]\inj \Zp[[t^\wh{\Z}]]$ is dense. 
Since the multiplication by $\Phi_m(t)$ is a continuous endomorphism on a compact Hausdorff space $\Zp[[t^\wh{\Z}]]$, 
it is a closed map and its image $(\Phi_m(t))$ is closed. 
Therefore we have the equality $\Ker (\mod \Phi_m(t))=(\Phi_m(t))$. \\[-2mm]

\emph{Step 2.} We obtain an inclusion of the form ``$M\subset IM$'': 
If $g\in {\rm Ann}(\Phi_m(t)^k)$, then we have $g=\Phi_m(t)h$ for some $h\in \Zp[[t^{\wh{\Z}}]]$ by Step 1. By $\Phi_m(t)^kg=\Phi_m(t)^{k+1}h=0$, we have $h \in {\rm Ann}(\Phi_m(t)^{k+1})$. 
Thus $ {\rm Ann}(\Phi_m(t)^k) \subset \Phi_m(t)({\rm Ann}(\Phi_m(t)^{k+1}))$ holds. 
Since $\{{\rm Ann}(\Phi_m(t)^k)\}_k$ is an increasing sequence with respect to inclusions, by taking $\cup_k$, we obtain $$\cup_k {\rm Ann}(\Phi_m(t)^k) \subset \Phi_m(t) (\cup_k {\rm Ann}(\Phi_m(t)^k)).$$

\emph{Step 3.} Let $g\in \Zp[[t^{\wh{\Z}}]]$ and suppose $\Phi_m(t)g=0$. 
For each $n\in \N_{>0}$, let $M$ and $I$ denote the image of $\cup_k {\rm Ann}(\Phi_m(t)^k)$ and $(\Phi_m(t))$ in $A:=\Zp[t^{\Z/n\Z}]$ respectively. Then we have $IM\subset M$. Let $g$ denote the image of $g$ in $M$ also. Since $A$ is a Noetherian ring, $M$ is a finitely generated $A$-module.  
By a well-known variant of the Nakayama--Azumaya--Krull Lemma (e.g.~\cite[Corollary 2.5]{AtiyahMacDonald}), there exists some $\alpha\in A$ satisfying $\alpha-1 \in I$ and $\alpha M=0$. 
Let $\beta \in A$ with $\alpha-1=\beta\Phi_m(t)$. 
Since $g\in M$, we have $\alpha g=(1+\beta \Phi_m(t))g=0$. By the assumption $\Phi_m(t)g=0$, we obtain $g=0$ in $A$. Therefore we have $g=0$ in $\Zp[[t^{\hat{\Z}}]]$. 

Thereby, we proved that $f(t)=\Phi_m(t)$ is not a zero-divisor in $\Zp[[t^{\hat{\Z}}]]$. 
This completes the proof of the lemma. 
\end{proof} 

\begin{lem} \label{unit} For each cyclotomic polynomial $\Phi_m(t)$ and a unit $v\in \wh{\Z}$, 
the fraction $\Phi_m(t^v)/\Phi_m(t)$ is defined and is a unit of $\wh{\Z}[[t^{\wh{\Z}}]]$. 
\end{lem}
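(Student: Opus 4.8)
The plan is to work through the substitution automorphism. Let $\sigma_v$ denote the ring automorphism of $\wh{\Z}[[t^{\wh{\Z}}]]$ induced by the automorphism $x\mapsto x^{v}$ of the pro-cyclic group $t^{\wh{\Z}}$; this is well defined and invertible with inverse $\sigma_{v^{-1}}$ precisely because $v\in\wh{\Z}^{\times}$, it satisfies $\sigma_v(t^{a})=t^{va}$ for all $a\in\wh{\Z}$, and so for $\Phi_m(t)=\sum_i a_i t^{i}$ we have $\Phi_m(t^{v})=\sum_i a_i t^{vi}=\sigma_v(\Phi_m(t))$. First I would reduce the whole statement to the single divisibility $\Phi_m(t)\mid\Phi_m(t^{v})$ in $\wh{\Z}[[t^{\wh{\Z}}]]$. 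Granting it, the same assertion for $v^{-1}$ gives $\Phi_m(t)\mid\Phi_m(t^{v^{-1}})$, and applying $\sigma_v$ (note $\sigma_v(\Phi_m(t^{v^{-1}}))=\Phi_m(t)$) yields $\Phi_m(t^{v})\mid\Phi_m(t)$. Writing $\Phi_m(t^{v})=\Phi_m(t)\,u$ and $\Phi_m(t)=\Phi_m(t^{v})\,w$ then gives $\Phi_m(t)(1-uw)=0$, and since $0\neq\Phi_m(t)\in\Z[t]$ is not a zero-divisor by Lemma \ref{not-ann}, we get $uw=1$, i.e.\ $u\in\wh{\Z}[[t^{\wh{\Z}}]]^{\times}$. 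That same non-zero-divisor property shows $u$ is the unique element with $\Phi_m(t)\,u=\Phi_m(t^{v})$, so the fraction $\Phi_m(t^{v})/\Phi_m(t)$ is defined and equals this unit.

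For the divisibility $\Phi_m(t)\mid\Phi_m(t^{v})$ I would use the decomposition $\wh{\Z}[[t^{\wh{\Z}}]]\cong\prod_p\Zp[[t^{\wh{\Z}}]]$ and argue prime by prime, so it suffices to show $\Phi_m(t^{v})\in(\Phi_m(t))$ in $R:=\Zp[[t^{\wh{\Z}}]]$. Here I would invoke the equality $\Ker(\bmod\,\Phi_m(t))=(\Phi_m(t))$ inside $R$, established in the proof of Lemma \ref{not-ann}; it is then enough to check that the image of $\Phi_m(t^{v})$ under $\bmod\,\Phi_m(t)\colon R\surj\Zp[t]/(\Phi_m(t))$ vanishes. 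Since this map factors through $R\surj\Zp[t^{\Z/m\Z}]=\Zp[t]/(t^m-1)$, in which $t^{v}$ is carried to $t^{\ol v}$ for $\ol v:=(v\bmod m)\in(\Z/m\Z)^{\times}$, that image is the class of $\Phi_m(t^{\ol v})$. Now $\Phi_m(t)$ is separable over $\Qp$ (it divides $t^m-1$, whose derivative $mt^{m-1}$ has no common root with it in an algebraic closure, as $m\neq 0$ in $\Qp$), so $\Zp[t]/(\Phi_m(t))$ embeds into $\Qp[t]/(\Phi_m(t))=\prod_\mu\Qp(\zeta_\mu)$ with each $\zeta_\mu$ a primitive $m$-th root of unity, the class of $t$ going to $(\zeta_\mu)_\mu$; hence the class of $\Phi_m(t^{\ol v})$ goes to $\big(\Phi_m(\zeta_\mu^{\ol v})\big)_\mu$. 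As $\gcd(\ol v,m)=1$, each $\zeta_\mu^{\ol v}$ is again a primitive $m$-th root of unity, so every $\Phi_m(\zeta_\mu^{\ol v})=0$, the class of $\Phi_m(t^{\ol v})$ vanishes, and the divisibility follows.

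The step that requires care is the passage to the inverse limit: in each finite layer $\Zp[t]/(t^n-1)$ the element $\Phi_m(t)$ is itself a zero-divisor, so a layer-by-layer division need not yield a well-defined or even compatible quotient. The argument above sidesteps this entirely by routing through $\Zp[t]/(\Phi_m(t))$ together with the already-proven description of $\Ker(\bmod\,\Phi_m(t))$, after which Lemma \ref{not-ann} does the rest. If one preferred not to reuse that kernel computation, one could instead, for each $n$ with $m\mid n$, pick a positive integer $w\equiv v\pmod n$ with $\gcd(w,m)=1$, use the elementary divisibility $\Phi_m(t)\mid\Phi_m(t^{w})$ in $\Z[t]$ to see that the solution set $S_n=\{h:\Phi_m(t)\,h=\Phi_m(t^{v})\text{ in }\Zp[t]/(t^n-1)\}$ is nonempty, observe that the $S_n$ form an inverse system of nonempty compact sets, and take $\varprojlim_n S_n\neq\emptyset$; but the route through $\bmod\,\Phi_m(t)$ is the shorter one.
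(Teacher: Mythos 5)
Your proof is correct and follows essentially the same route as the paper's: establish the two divisibilities $\Phi_m(t)\mid\Phi_m(t^v)$ and $\Phi_m(t^v)\mid\Phi_m(t)$ via the kernel identity $\Ker(\bmod\,\Phi_m(t))=(\Phi_m(t))$ from the proof of Lemma \ref{not-ann}, then cancel using the non-zero-divisor property of $\Phi_m(t)$. The only differences are expository: you route through the factors $\Zp[[t^{\wh{\Z}}]]$ and spell out, via separability and the permutation of primitive $m$-th roots of unity, why $\Phi_m(t^v)$ lies in the kernel --- a point the paper's proof leaves implicit.
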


\begin{proof} The ring $\wh{\Z}[[t^{\wh{\Z}}]]$ is a compact Hausdorff topological ring with respect to the topology such that $\{\Ker (\wh{\Z}[[t^{\wh{\Z}}]]\surj \Z/{n_1}\Z[t^{\Z/n_2\Z}]\}_{n_1,n_2}$ is a fundamental neighborhood system of $0$. 
By a similar argument to the case of $\Zp[[t^{\wh{\Z}}]]$, we have the natural surjection $\mod \Phi_m(t): \wh{\Z}[[t^{\wh{\Z}}]]\surj \wh{\Z}[t]/(\Phi_m(t))$ and the equality $(\Phi_m(t))=\Ker (\mod \Phi_m(t))$ of ideals in $\wh{\Z}[[t^{\wh{\Z}}]]$. 
Since $\Phi_m(t^v) \in \Ker (\mod \Phi_m(t))$, we have $\Phi_m(t^v)\in (\Phi_m(t))$. 
Hence $\Phi_m(t^v)=\Phi_m(t)f$ holds for some $f\in \wh{\Z}[[t^{\wh{\Z}}]]$. 
If we put $s=t^v$, then by a similar argument, we have $\Phi_m(t)=\Phi_m(s^{v^{-1}})\in \Ker(\mod \Phi_m(s))= (\Phi_m(s))=(\Phi_m(t^v))$. Hence we have $\Phi_m(t)=\Phi_m(t^v)g$ for some $g \in \wh{\Z}[[t^{\wh{\Z}}]]$. 
Now we have $\Phi_m(t)=\Phi_m(t)fg$. Since $\Phi_m(t)$ is not a zero divisor by Lemma \ref{not-ann}, 
we have $1-fg=0$. 
Therefore $f=\Phi_m(t^v)/\Phi_m(t)$ is a unit of $\wh{\Z}[[t^{\wh{\Z}}]]$. 
\end{proof} 

\begin{lem} \label{Phi} 
For polynomials $f(t), g(t) \in \Z[t]$ and a unit $v$ of $\wh{\Z}$, 
suppose the equality $(f(t))=(g(t^v))$ of ideals in $\wh{\Z}[[t^\wh{\Z}]]$. 
Then the $m$-th cyclic polynomial $\Phi_m(t)$ divides $f(t)$ if and only if it does $g(t)$. 
If $\Phi_m(t)$ divides $f(t)$, then the equality $(f(t)/\Phi_m(t))=(g(t^v)/\Phi_m(t^v))$ of ideals in $\wh{\Z}[[t^\wh{\Z}]]$ holds. 
\end{lem}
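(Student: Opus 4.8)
The plan is to deduce the lemma formally from the two facts already in hand: multiplication by $\Phi_m(t)$ is injective on $\wh{\Z}[[t^{\wh{\Z}}]]$ (Lemma \ref{not-ann}), and $\Phi_m(t^v)=u\,\Phi_m(t)$ where $u:=\Phi_m(t^v)/\Phi_m(t)$ is a unit of $\wh{\Z}[[t^{\wh{\Z}}]]$ (Lemma \ref{unit}); in particular $(\Phi_m(t))=(\Phi_m(t^v))$ as ideals. First I would record two preliminary remarks. (a) The substitution $\sigma_v\colon\wh{\Z}[[t^{\wh{\Z}}]]\to\wh{\Z}[[t^{\wh{\Z}}]]$, $t\mapsto t^v$, is a ring automorphism with inverse $\sigma_{v^{-1}}$, because $v$ maps to a unit of every $\Z/n\Z$, and $\sigma_v(h(t))=h(t^v)$ for $h\in\Z[t]$. (b) For $h\in\Z[t]$, one has $\Phi_m(t)\mid h(t)$ in $\Z[t]$ if and only if $h(t)\in(\Phi_m(t))$ in $\wh{\Z}[[t^{\wh{\Z}}]]$: dividing by the monic $\Phi_m(t)$ gives $h=\Phi_m q+r$ with $q,r\in\Z[t]$ and $\deg r<\deg\Phi_m$, and since the composite $\Z[t]\inj\wh{\Z}[[t^{\wh{\Z}}]]\surj\wh{\Z}[t]/(\Phi_m(t))$ (the last map being $\mod\,\Phi_m(t)$, with kernel $(\Phi_m(t))$ by the proof of Lemma \ref{unit}) is reduction modulo $\Phi_m(t)$ and a polynomial of degree $<\deg\Phi_m$ lies in $(\Phi_m(t))$ only if it is $0$, we get $h\in(\Phi_m(t))\iff r=0\iff\Phi_m(t)\mid h$ in $\Z[t]$.

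Now write the hypothesis as $f(t)=g(t^v)\,a$ and $g(t^v)=f(t)\,b$ with $a,b\in\wh{\Z}[[t^{\wh{\Z}}]]$. For the divisibility claim, suppose $\Phi_m(t)\mid g(t)$ in $\Z[t]$ and write $g=\Phi_m g_1$ with $g_1\in\Z[t]$; applying $\sigma_v$ gives $g(t^v)=\Phi_m(t^v)g_1(t^v)\in(\Phi_m(t^v))=(\Phi_m(t))$, hence $f(t)=g(t^v)a\in(\Phi_m(t))$, so $\Phi_m(t)\mid f(t)$ in $\Z[t]$ by (b). The reverse implication is symmetric: from $\Phi_m(t)\mid f(t)$ one gets $g(t^v)=f(t)b\in(\Phi_m(t))=(\Phi_m(t^v))$, and applying $\sigma_{v^{-1}}$ and (b) yields $\Phi_m(t)\mid g(t)$.

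For the last statement, assume $\Phi_m(t)\mid f(t)$ in $\Z[t]$; by the previous paragraph $\Phi_m(t)\mid g(t)$ as well, so write $f=\Phi_m f_1$ and $g=\Phi_m g_1$ in $\Z[t]$, whence $f(t)/\Phi_m(t)=f_1(t)$ and $g(t^v)/\Phi_m(t^v)=g_1(t^v)$ (using $g(t^v)=\Phi_m(t^v)g_1(t^v)$ and that $\Phi_m(t^v)$ is not a zero-divisor). Substituting $f=\Phi_m f_1$ and $g(t^v)=\Phi_m(t^v)g_1(t^v)=u\,\Phi_m(t)\,g_1(t^v)$ into $f(t)=g(t^v)a$ gives $\Phi_m(t)\bigl(f_1(t)-u\,g_1(t^v)\,a\bigr)=0$, so $f_1(t)=u\,g_1(t^v)\,a\in(g_1(t^v))$ by Lemma \ref{not-ann}; symmetrically $g(t^v)=f(t)b$ gives $g_1(t^v)=u^{-1}\,b\,f_1(t)\in(f_1(t))$. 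Hence $(f_1(t))=(g_1(t^v))$, i.e.\ $(f(t)/\Phi_m(t))=(g(t^v)/\Phi_m(t^v))$, as claimed. The only point needing a little care is remark (b)---relating divisibility in $\Z[t]$ to membership in the principal ideal $(\Phi_m(t))$ of the completed ring---after which everything is a direct manipulation with the unit $u$ and the non-zero-divisor property of $\Phi_m(t)$, so I do not anticipate a substantial obstacle.
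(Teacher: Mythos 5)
Your proof is correct, and the first half takes a genuinely different route from the paper's. For the divisibility equivalence, the paper evaluates at a primitive $m$-th root of unity: it pushes the ideal equality $(f(t))=(g(t^v))$ down through $\wh{\Z}[[t^{\wh{\Z}}]]\surj \Zp[[t^{\wh{\Z}}]]\surj \Zp[\zeta_m]$ to get $(f(\zeta_m))=(g(\zeta_m^v))$ in $\Zp[\zeta_m]$, and then uses that $\zeta_m^v$ is again a primitive $m$-th root of unity, so that $f(\zeta_m)=0\iff g(\zeta_m^v)=0\iff g(\zeta_m)=0\iff \Phi_m\mid g$. You instead stay entirely inside the completed group ring, using the identification $\Ker(\mod\Phi_m(t))=(\Phi_m(t))$ from the proof of Lemma \ref{unit} together with a division-with-remainder argument (and the freeness of $\wh{\Z}[t]/(\Phi_m(t))$ over $\wh{\Z}$, plus $\Z\inj\wh{\Z}$) to show that for $h\in\Z[t]$, membership $h\in(\Phi_m(t))$ in $\wh{\Z}[[t^{\wh{\Z}}]]$ is equivalent to $\Phi_m\mid h$ in $\Z[t]$; your remark (b) is the one point that genuinely needs care, and your justification of it is sound. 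The paper's route is shorter because evaluation at $\zeta_m$ is already set up in Section \ref{ssAlg}, while yours is more self-contained within the completed ring and makes the transfer between $\Z[t]$-divisibility and ideal membership explicit, which the paper leaves implicit in the step ``$g(\zeta_m)=0$, hence $\Phi_m(t)$ divides $g(t)$.'' For the final claim both arguments are essentially identical: divide the ideal equality by the non-zero-divisor $\Phi_m(t)$ (Lemma \ref{not-ann}) and absorb the unit $\Phi_m(t^v)/\Phi_m(t)$ (Lemma \ref{unit}).
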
 

\begin{proof}
For each $m$-th root of unity $\zeta_m \in \ol{\Q}$ and a unit $v$ of $\wh{\Z}$, 
$\zeta_m^v$ is defined and is again a primitive $m$-th root of unity. 
Hence the two equalities $g(\zeta_m)=0$ and $g(\zeta_m^v)=0$ are equivalent. 

Let $p$ be an arbitrary prime number. 
Consider the natural surjection $\wh{\Z}[[t^{\wh{\Z}}]]\surj \Zp[[t^{\wh{\Z}}]]\surj \Zp[\zeta_m]$. 
The equality $(f(t))=(g(t^{\nu}))$ of ideals in $\wh{\Z}[[t^{\wh{\Z}}]]$ yields the equality $(f(\zeta_m))=(g(\zeta_m^v))$ of ideals in $\Zp[\zeta_m]$. 

If $\Phi_m(t)$ divides $f(t)$, then we have $f(\zeta_m)=0$, $g(\zeta_m^v)=0$, and $g(\zeta_m)=0$. Hence 
$\Phi_m(t)$ divides $g(t)$. 
By Lemma \ref{not-ann}, $\Phi_m(t)$ is not a zero-divisor in $\wh{\Z}[[t^\wh{\Z}]]$. 
By Lemma \ref{unit}, $\Phi_m(t^v)/\Phi_m(t)$ is a unit of $\wh{\Z}[[t^\wh{\Z}]]$. 
Therefore we obtain the equality $(f(t)/\Phi_m(t))=(g(t^{\nu})/\Phi_m(t))=(g(t^{\nu})/\Phi_m(t^v))$ of ideals in $\wh{\Z}[[t^\wh{\Z}]]$. 
\end{proof}

\begin{lem} \label{polynomials} 
For two reciprocal polynomials $f(t),g(t)\in \Z[t]$ and a unit $v$ of $\wh{\Z}$, 
suppose the equality $(f(t))=(g(t^v))$ of ideals in $\wh{\Z}[[t^\wh{\Z}]]$. 
Then $f(t)$ and $g(t)$ coincide up to multiplication by a unit of $\Z[t^\Z]$. 
\end{lem}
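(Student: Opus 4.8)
The plan is to remove the cyclotomic factors of $f$ and $g$ by repeated use of Lemma \ref{Phi}, reducing to the case where neither polynomial vanishes at a root of unity; then to read off the absolute values $|R(f(t),t^n-1)|$ of the cyclic resultants from the orders of the finite rings $\wh{\Z}[t^{\Z/n\Z}]/(f(t))$ via Weber's Proposition \ref{Weber}; then to absorb the twist by $v$ using the automorphism $t\mapsto t^v$ of each finite layer; and finally to apply Fried's Proposition \ref{Fried}.

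\emph{Step 1 (stripping cyclotomic factors).} For each $m\in\N_{>0}$, Lemma \ref{Phi} gives that $\Phi_m(t)\mid f(t)$ if and only if $\Phi_m(t)\mid g(t)$, and when this holds, dividing both sides yields the equality $(f(t)/\Phi_m(t))=(g(t^v)/\Phi_m(t^v))=\bigl((g(t)/\Phi_m(t))(t^v)\bigr)$ in $\wh{\Z}[[t^{\wh{\Z}}]]$, again with $f(t)/\Phi_m(t),\,g(t)/\Phi_m(t)\in\Z[t]$. Iterating this (the number of cyclotomic factors counted with multiplicity strictly drops at each step) I would obtain $f(t)=c(t)f_0(t)$ and $g(t)=c(t)g_0(t)$ with a common cyclotomic product $c(t)=\prod_m\Phi_m(t)^{c_m}$ and with $f_0(t),g_0(t)\in\Z[t]$ having no root at any root of unity, still satisfying $(f_0(t))=(g_0(t^v))$. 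Since $f$ and $g$ are reciprocal, the multiplicity of $\Phi_1(t)=t-1$ in each of them is even (from $f(t)=t^{\deg f}f(1/t)$ one sees that if $f(t)=(t-1)^ah(t)$ with $h(1)\neq0$ then $a$ is even); as every $\Phi_m(t)$ with $m\geq2$ and every even power of $t-1$ is reciprocal, $c(t)$ is reciprocal, and hence so are $f_0=f/c$ and $g_0=g/c$. It then suffices to prove $f_0=\pm g_0$, for then $f=\pm g$, which is the asserted equality up to the unit $\pm1$ of $\Z[t^{\Z}]$.

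\emph{Step 2 (cyclic resultants and the $v$-twist).} Fix $n\in\N_{>0}$ and let $\bar v\in(\Z/n\Z)^{\times}$ be the image of $v\in\wh{\Z}^{\times}$. Since $f_0(t)$ and $t^n-1$ have no common root and the leading and constant coefficients of $t^n-1$ are $\pm1$, Proposition \ref{Weber} shows $\Z[t]/(f_0(t),t^n-1)$ is finite of order $|R(f_0(t),t^n-1)|$; because $\wh{\Z}/d\wh{\Z}\cong\Z/d\Z$ for every $d$, tensoring over $\Z$ with $\wh{\Z}$ preserves the order, so $\#\bigl(\wh{\Z}[t^{\Z/n\Z}]/(f_0(t))\bigr)=|R(f_0(t),t^n-1)|$, and the same for $g_0$. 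The surjection $\wh{\Z}[[t^{\wh{\Z}}]]\surj\wh{\Z}[t^{\Z/n\Z}]$ sends $t^v$ to $t^{\bar v}$, so it carries $(f_0(t))=(g_0(t^v))$ to the equality $(f_0(t))=(g_0(t^{\bar v}))$ of ideals in $\wh{\Z}[t^{\Z/n\Z}]$; and $t\mapsto t^{\bar v}$ is a ring automorphism of $\wh{\Z}[t^{\Z/n\Z}]$ (inverse $t\mapsto t^{\bar v^{-1}}$) carrying $(g_0(t))$ onto $(g_0(t^{\bar v}))$, so the two quotients have equal cardinality. Putting these together gives $|R(f_0(t),t^n-1)|=|R(g_0(t),t^n-1)|$ for all $n\in\N_{>0}$.

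\emph{Step 3 (Fried's proposition).} Neither $f_0$ nor $g_0$ vanishes at a root of unity, so $R(f_0(t),t^n-1)\neq0$ and $R(g_0(t),t^n-1)\neq0$ for every $n$, and both polynomials are reciprocal; hence Proposition \ref{Fried} applies and the sequence $\{|R(f_0(t),t^n-1)|\}_n$ determines $f_0$ while the identical sequence determines $g_0$. Since $R(-h(t),t^n-1)=(-1)^nR(h(t),t^n-1)$, this determination is only up to sign, so $f_0=\pm g_0$ and therefore $f=\pm g$, which is the assertion of the lemma. The step I expect to require the most care is Step 1: arranging that cyclotomic factors come off $f$ and $g$ in parallel so that the cofactors $f_0,g_0$ really are reciprocal and the surviving equality still has the precise form $(f_0(t))=(g_0(t^v))$; the other delicate point is the passage to the finite layers in Step 2, in particular checking that the twist $t\mapsto t^v$ leaves the relevant orders unchanged.
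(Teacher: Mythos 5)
Your proposal is correct and follows essentially the same route as the paper: strip the common cyclotomic divisors via Lemma \ref{Phi}, use Weber's Proposition \ref{Weber} to identify the orders of the finite quotients $\wh{\Z}[t^{\Z/n\Z}]/(f_0(t))$ with the cyclic resultants, absorb the twist $t\mapsto t^{\bar v}$ by the ring automorphism of each finite layer, and conclude with Fried's Proposition \ref{Fried}. Your extra care in Step 1 (even multiplicity of $t-1$ in a reciprocal polynomial, so the cofactors remain reciprocal) is a welcome elaboration of the paper's remark that the quotient of reciprocal polynomials is reciprocal.
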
 

\begin{proof} 
By Lemma \ref{Phi}, we can reduce all the common cyclotomic divisors of $f(t)$ and $g(t)$. 
Note that the polynomial obtained as the quotient of two reciprocal polynomials is again reciprocal. From the reduced equality of ideals, we can derive the equality of polynomials 
by a similar method to \cite[Proposition 4.10]{BoileauFriedl2015}. 
Indeed, suppose that any cyclotomic polynomial does not divide $f(t)$ and $g(t)$. 
By Weber's proposition (Proposition \ref{Weber}), $\Z[t^{\Z/n\Z}]/(f(t))$ is a finite group with order $|R(f(t), t^n-1)|$. 
Hence we have $\wh{\Z}[t^{\Z/n\Z}]/(f(t))\cong \Z[t^{\Z/n\Z}]/(f(t))$. If we write $v=(v_n \mod n)_n$ with $v_n \in \Z$, then we have $|R(f(t), t^n-1)|=|R(g(t^{v_n}), t^n-1)|=|R(g(t),t^n-1)|$. 
By Fried's proposition (Proposition \ref{Fried}), $f(t)$ and $g(t)$ coincide up to multiplication by a unit of $\Z[t^{\Z}]$. 
\end{proof} 

In order to reduce common cyclotomic divisors, it is necessary to consider the inverse limit of modules. 
We cannot detect common non-cyclotomic divisors by their roots, 
because we can substitute only roots of unity for elements of $\wh{\Z}[[t^{\wh{\Z}}]]$. 
Fried's proposition is also essential. 

\begin{eg} \label{example} 
Fried's pair $(F(t), G(t))$ is given by 
$$F(t)=\Phi_{pq}(t)\Phi_{p^2q}(t)\Phi_{pq^2}(t), G(t)=\Phi_{p^2q^2}(t)\Phi_{pq}(t)\Phi_{pq}(t)$$ 
where $p,q$ are different prime numbers. They have same $n$-th cyclic resultants for every $n$ (\cite{Fried1988}). 
In addition, if we put 
$f(t)=F(t)^2G(t)$ and $g(t)=F(t)G(t)^2$, then $f(t)$ and $g(t)$ have the same $n$-th cyclic resultants for every $n$ and the same sets of zeros. 
By our argument, we have $(F(t)) \neq (G(t^v))$ and $(f(t))\neq (g(t^v))$ as ideals of $\wh{\Z}[[t^{\wh{\Z}}]]$ for any $v\in \wh{\Z}^*$. 
Hence they can be distinguished by comparing families of quotients $(\wh{\Z}[[s^{\wh{\Z}}]]/(F(s), s^n-1))_n$ etc. 
\end{eg}

\section{Topological lemmas} \label{ssTop}

For a discrete group $\pi$, profinite completion and Abelianization commute. 
We simply denote the profinite completion of the Abelianization of $\pi$ by $\wh{\pi}^{\rm ab}$. 
If $\pi$ is a finitely generated Abelian (additive) group, then we have $\wh{\pi}\cong \pi\otimes \wh{\Z}$. 
If $\pi$ is a finite group, then we have $\wh{\pi}\cong \pi$. 
For a finitely generated module $M$ over a Noetherian ring $R$, let ${\rm Fitt}_RM \subset R$ denote the ($0$-th) Fitting ideal of $M$ over $R$.

The following lemma tells that the profinite completions of knot groups know those of fundamental groups of finite covers over the knot exteriors.

\begin{lem} \label{hatsubgp} 
Let $\pi$ be a finitely generated discrete group, $G$ a finite group, and $\wh{\pi}\surj G$ a surjection from the profinite completion. 
Then a surjection $\pi\surj G$ is induced. 
Put $B:=\ker(\wh{\pi}\surj G)$ and $\Gamma:=\ker(\pi \surj G)$. 
Then the inclusion map $\Gamma \inj B$ induces an isomorphism $\wh{\Gamma}\congto B$ from the profinite completion. 
\end{lem}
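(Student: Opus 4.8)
The plan is to prove the two assertions in turn; the first is a one-line density argument and the second is the real content.

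For the induced surjection $\pi\surj G$: the canonical homomorphism $\pi\to\wh{\pi}$ has dense image, so its composite with the continuous surjection $\wh{\pi}\surj G$ is a homomorphism $\pi\to G$ with dense image, and since $G$ is finite and discrete this image is all of $G$. By construction the kernel of $\pi\surj G$ is $\Gamma$, so $\Gamma\trianglelefteq\pi$ with $[\pi:\Gamma]=|G|<\infty$; by Reidemeister--Schreier $\Gamma$ is again finitely generated, though the argument below does not really need this.

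For $\wh{\Gamma}\congto B$, the key point I would isolate is that the profinite topology induced on $\Gamma$ from $\wh{\pi}$ agrees with the intrinsic profinite topology of $\Gamma$. To see this, compare the family $\mca{A}$ of subgroups $N\trianglelefteq\pi$ with $[\pi:N]<\infty$ and $N\subseteq\Gamma$ against the family $\mca{B}$ of all $M\trianglelefteq\Gamma$ with $[\Gamma:M]<\infty$. Clearly $\mca{A}\subseteq\mca{B}$; conversely, given $M\in\mca{B}$ one has $[\pi:M]<\infty$, so its normal core $N:=\bigcap_{g\in\pi}gMg^{-1}$ --- a finite intersection of finite-index subgroups --- lies in $\mca{A}$ and is contained in $M$. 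Thus $\mca{A}$ and $\mca{B}$ are mutually cofinal. I would also note that $\mca{A}$ is cofinal among all finite-index normal subgroups of $\pi$: for any such $N_0$, the intersection $N_0\cap\Gamma$ is normal in $\pi$ (both factors are), of finite index, and contained in $\Gamma$.

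Now the two presentations match up. From the cofinality of $\mca{A}$ in $\pi$, one gets $\wh{\pi}=\varprojlim_{N\in\mca{A}}\pi/N$, and passing to the kernels of the compatible surjections $\pi/N\surj\pi/\Gamma=G$ identifies $B$ with $\varprojlim_{N\in\mca{A}}\Gamma/N$; from the mutual cofinality of $\mca{A}$ and $\mca{B}$ one gets $\wh{\Gamma}=\varprojlim_{M\in\mca{B}}\Gamma/M=\varprojlim_{N\in\mca{A}}\Gamma/N$. Both identifications are built from the canonical quotient maps out of $\Gamma$, hence are compatible with the homomorphism $\wh{\Gamma}\to\wh{\pi}$ induced by $\Gamma\inj\pi$ --- which lands in $B$ since $\Gamma\to\pi\surj G$ is trivial --- so that homomorphism becomes the canonical isomorphism between inverse limits over mutually cofinal systems; checking on the dense subgroup $\Gamma$ confirms it is exactly the inclusion-induced map. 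The only point requiring genuine care --- the ``main obstacle'', such as it is --- is the bookkeeping that these two inverse-limit descriptions of the completion of $\Gamma$ arise from the same maps, but since $[\pi:\Gamma]$ is finite there is nothing deep hidden here.
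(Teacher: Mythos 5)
Your proof is correct and follows essentially the same route as the paper's: both arguments establish that the finite-index normal subgroups of $\pi$ contained in $\Gamma$ form a family cofinal both in the normal finite-index subgroups of $\pi$ (via $N_0\cap\Gamma$) and in those of $\Gamma$ (via the normal core), and then identify $B$ and $\wh{\Gamma}$ as inverse limits over this common indexing family using $\Gamma/N=\ker(\pi/N\surj G)$. No gaps; the extra remarks (density for the induced surjection, Reidemeister--Schreier) are fine but not needed beyond what the paper already uses.
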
 

\begin{proof} 
The set $\mca{P}$ of normal subgroups of $\pi$ of finite index is a countable directed set with order given by the reverse of inclusions. Indeed, if $P_1, P_2\in \mca{P}$, then $P_1\cap P_2 \in \mca{P}$. 

Note that $\Gamma$ is a normal subgroup of $\pi$ of finite index. 
Let $\mca{G}$ denote the set of normal subgroups of $\Gamma$ of finite index 
and put $\mca{P}':=\mca{G}\cap \mca{P}$. 
For each $P\in \mca{P}$, there exists some $P'\in \mca{P}'$ with $P'\subset P$. 
Indeed, we may put $P'=P\cap \Gamma$. 
In addition, for each $P \in \mca{G}$, there exists some $P' \in \mca{P}'$ with $P' \subset P$. 
Indeed, we may take the intersection of all the $\pi$-conjugates of $P$ as  $P'$. 
Therefore we have natural isomorphisms $\varprojlim_{P\in \mca{P}'} \Gamma/P \cong \varprojlim_{P\in \mca{G}} \Gamma/P = \wh{\Gamma}$ and $\varprojlim_{P\in \mca{P}'} \pi/P \cong \varprojlim_{P\in \mca{P}} \pi/P = \wh{\pi}$. 
Since $\Gamma/P = \ker (\pi/P \surj G)$ holds for each $P\in \mca{P}'$, we obtain a natural isomorphism $\wh{\Gamma}\congto B$. 
\end{proof}

\begin{rem} \label{homologytorsion} 
Let $\pi$ and $\pi'$ be knot groups, $\wh{\pi}'\congto \wh{\pi}$ an isomorphisms on their profinite completions, and $\pi\surj G$ a surjection to a finite group. Let $\pi\inj \wh{\pi}\surj G$ and $\pi'\inj \wh{\pi}'\congto \wh{\pi}\surj G$ denote the induced surjections, and $X_G\to S^3-J$ and $Y_G\to S^3-K$ the corresponding covers of the knot exteriors. 
Then Lemma \ref{hatsubgp} yields a natural isomorphism $\wh{\pi}_1(X_G)\congto \wh{\pi}_1(Y_G)$. 
In addition, through the Hurewicz isomorphisms, 
an isomorphism $H_1(X_G)_{\rm tor}\congto H_1(Y_G)_{\rm tor}$ on $\Z$-torsions is induced.

Especially, if we take a representation of a knot group $\pi$ over a completed ring, then homology torsions of the corresponding inverse system of finite covers is determined by $\wh{\pi}$. 
\red{Therefore, we can study profinite rigidity of invariants associated to non-abelian covers.}  
\end{rem}

Next, we induce an isomorphism of completed Alexander modules over the completed group ring $\wh{\Z}[[t^\wh{\Z}]]$ and obtain an equality of ideals: 

\begin{lem} \label{ideals} 
Let $J$ and $K$ be knots in $S^3$ with an isomorphism $\varphi: \wh{\pi}_1(S^3-J) \congto \wh{\pi}_1(S^3-K)$ between the profinite completions of the knot groups. 
Then for some unit $v$ of $\wh{\Z}$, the equality $(\Delta_J(t^v))=(\Delta_K(t))$ of ideals in $\wh{\Z}[[t^\wh{\Z}]]$ holds. 
\end{lem}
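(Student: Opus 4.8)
The plan is to extract, from the abstract isomorphism $\varphi$, a compatible isomorphism of the inverse systems of first homology groups of the cyclic branched (or unbranched) covers, and then to repackage this as an isomorphism of completed Alexander modules over $\wh{\Z}[[t^{\wh{\Z}}]]$, from which the equality of Fitting ideals — hence of the principal ideals generated by the Alexander polynomials — will follow. First I would apply the abelianization functor: since $\varphi$ is an isomorphism of profinite completions and profinite completion commutes with abelianization, $\varphi$ induces an isomorphism $\wh{\pi}_1(S^3-J)^{\mathrm{ab}} \congto \wh{\pi}_1(S^3-K)^{\mathrm{ab}}$, i.e. $\wh{\Z} \congto \wh{\Z}$, which is multiplication by some unit $v \in \wh{\Z}^{*}$. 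This $v$ is the unit appearing in the statement; it records the fact that $\varphi$ need not respect a chosen meridian, only up to a profinite unit. For each $n$, composing the surjection $\wh{\pi}_1(S^3-J) \surj \Z/n\Z$ (reduction of the abelianization) with $\varphi^{-1}$ gives a surjection $\wh{\pi}_1(S^3-K)\surj \Z/n\Z$ which, under abelianization, is the standard one twisted by $v_n \bmod n$ where $v=(v_n)_n$.

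Next I would invoke Lemma \ref{hatsubgp} and Remark \ref{homologytorsion}: for each $n$, letting $X_n \to S^3-J$ and $Y_n\to S^3-K$ be the cyclic covers of degree $n$ corresponding to these surjections, $\varphi$ induces isomorphisms $\wh{\pi}_1(X_n)\congto \wh{\pi}_1(Y_n)$, hence $\wh{\pi}_1(X_n)^{\mathrm{ab}}\congto \wh{\pi}_1(Y_n)^{\mathrm{ab}}$, compatibly as $n$ varies. Passing to branched covers (filling in the branch locus) and using $\wh{H}_1 \cong H_1\otimes\wh{\Z}$ for finitely generated abelian groups, I get a compatible family of $\wh{\Z}$-module isomorphisms between $\wh{H}_1$ of the branched $\Z/n\Z$-covers of $J$ and those of $K$. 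The crucial point is \emph{equivariance}: the deck transformation group $\Z/n\Z$ acts on $H_1(X_n)$, giving it the structure of a $\wh{\Z}[t^{\Z/n\Z}]$-module, and because the identification of the deck group on the $K$-side is twisted by $v_n$, the induced isomorphism intertwines the $t$-action on the $J$-side with the $t^{v_n}$-action on the $K$-side. Taking the inverse limit over $n$ yields an isomorphism of the completed Alexander modules over $\wh{\Z}[[t^{\wh{\Z}}]]$, semilinear with respect to the substitution $t\mapsto t^{v}$.

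Finally, since the (branched) Alexander module of a knot $J$ over $\Z[t^{\Z}]$ has a square presentation matrix whose determinant is $\Delta_J(t)$, its completed version over $\wh{\Z}[[t^{\wh{\Z}}]]$ has $0$-th Fitting ideal $(\Delta_J(t))$; an isomorphism of finitely presented modules preserves Fitting ideals, and the twist $t\mapsto t^v$ turns $(\Delta_J(t))$ into $(\Delta_J(t^v))$. Hence $(\Delta_J(t^v)) = (\Delta_K(t))$ as ideals of $\wh{\Z}[[t^{\wh{\Z}}]]$. The main obstacle I anticipate is the bookkeeping in the equivariance step: one must verify that the isomorphisms from Lemma \ref{hatsubgp} can be chosen compatibly across all $n$ and genuinely respect the deck actions up to the recorded twist $v$, and that base-changing the Alexander module presentation to $\wh{\Z}[[t^{\wh{\Z}}]]$ behaves well under inverse limits (so that the completed Alexander module is still finitely presented with the expected Fitting ideal). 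This is where the definition and basic properties of the completed Alexander module — presumably developed alongside this lemma in Section \ref{ssTop} — do the real work.
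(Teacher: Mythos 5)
Your proposal follows essentially the same route as the paper: abelianization of $\varphi$ yields the unit $v$, Lemma \ref{hatsubgp} gives compatible isomorphisms of the $\wh{H}_1$ of the branched cyclic covers that are equivariant up to the twist $t\mapsto t^{v}$, and comparing Fitting ideals of the (completed) Alexander modules gives the stated equality. The only cosmetic difference is one of ordering: the paper first records the equality of Fitting ideals at each finite level $\wh{\Z}[t^{\Z/n\Z}]$ and then passes to the inverse limit of the ideals via a Mittag--Leffler argument, whereas you form the completed Alexander module first and take its Fitting ideal --- a reformulation the paper itself carries out immediately after the lemma and notes to be equivalent.
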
 

\begin{proof} 
Let $s$ and $t$ denote the meridians of $J$ and $K$ in $\pi_1(S^3-J)^{\rm ab}$ and $\pi_1(S^3-K)^{\rm ab}$ respectively. Then we have $\pi_1(S^3-J)^{\rm ab}=s^\Z$ and  $\pi_1(S^3-K)^{\rm ab}=t^\Z$. 
Let $\varphi: \wh{\pi}_1(S^3-J)^{\rm ab}\congto \wh{\pi}_1(S^3-K)^{\rm ab}$ denote the induced isomorphism. 
Since Abelianization and profinite completion commute, 
we have $\wh{\pi}_1(S^3-J)^{\rm ab}=s^{\wh{\Z}}$ and $\wh{\pi}_1(S^3-K)^{\rm ab}=t^{\wh{\Z}}$. 
If we denote the inverse image $\varphi^{-1}(t)$ of $t$ also by $t$, then we have $s=t^v$ for some unit $v$ of $\wh{\Z}$. 

Let $X_n\to S^3-J$ and $Y_n\to S^3-K$ denote the $\Z/n\Z$-covers, and let $M_n\to S^3$ and $N_n\to S^3$ denote the branched covers obtained as their Fox completions respectively (\cite{Fox1957}). 
The isomorphisms $\wh{\pi}_1(S^3-J)^{\rm ab}\congto \wh{\Z}; s\mapsto v$, $\wh{\pi}_1(S^3-K)^{\rm ab}\congto \wh{\Z}; t\mapsto 1$, and $\varphi$ form the following commutative diagram:  
$$\xymatrix{
\wh{\pi}_1(S^3-J) \ar@{->>}[r]  \ar[d]^{\cong}_{\varphi} & \wh{\pi}_1(S^3-J)^{\rm ab} \ar[d]^{\cong}_{\varphi} \ar[r]^{\ \ \ \ \ \cong} 
& \wh{\Z} \ar@{=}[d]\\ 
\wh{\pi}_1(S^3-K) \ar@{->>}[r] & \wh{\pi}_1(S^3-K)^{\rm ab} \ar[r]^{\ \ \ \ \ \cong} 
& \wh{\Z} \\
}$$ 
Let $\wh{\pi}_1(S^3-J)\surj \Z/n\Z$ denote the composite of the first row and the natural surjection $\wh{\Z} \surj \Z/n\Z$. Then Lemma \ref{hatsubgp} yields the natural isomorphism $\wh{\pi}_1(X_n)\cong \ker(\wh{\pi}_1(S^3-J) \to \Z/n\Z)$. 
In addition, we have well-known natural isomorphisms 
$\wh{\pi}_1(X_n)^{\rm ab} \cong \wh{H}_1(X_n)$ 
and an exact sequence $0\to s^{n\wh{\Z}}\to \wh{H}_1(X_n)\to \wh{H}_1(M_n)\to 0$ via the Hurewicz isomorphism, the Mayer--Vietoris exact sequence, and the Wang exact sequence. 
These modules and hence  $\wh{H}_1(M_n)$ $\cong \wh{H}_1(X_n)/s^{n \wh{\Z}}$ admit natural $s$-actions induced by conjugate and become $\wh{\Z}[s^{\Z/n\Z}]$-modules.  
Similarly, $\wh{H}_1(N_n)$ becomes a $\wh{\Z}[t^{\Z/n\Z}]$-module. 

Now we have $\wh{\pi}_1(S^3-J)\cong \wh{\pi}_1(S^3-K)\surj \Z/n\Z$ for each $n$. 
Hence Lemma \ref{hatsubgp} yields $\wh{\pi}_1(X_n)\cong \wh{\pi}_1(Y_n)$, as explained in Remark \ref{homologytorsion}. Since abelianization and profinite completion commute, the Hurewicz isomorphisms yield $\wh{H}_1(X_n)\cong \wh{H}_1(Y_n)$. 
Since the isomorphism $s^{n\wh{\Z}}\cong t^{n\wh{\Z}}$ commutes with other isomorphisms, we obtain a natural isomorphism $\varphi: \wh{H}_1(M_n)\cong \wh{H}_1(X_n)/s^{n\wh{\Z}} \congto \wh{H}_1(Y_n)/t^{n\wh{\Z}}\cong \wh{H}_1(N_n)$ of groups. 

We consider the $s^{\Z/n\Z}$-module $\wh{H}_1(M_n)$ as a $\wh{\Z}[t^{\Z/n\Z}]$-module via the induced isomorphism $\varphi:s^{\Z/n\Z}\congto t^{\Z/n\Z}; s\mapsto t^{v \mod n}$. 
We will verify that the induced group isomorphism $\varphi: \wh{H}_1(M_n)\congto \wh{H}_1(N_n)$ is $t^{\Z/n\Z}$-equivariant. Note that the following diagram consisting of exact rows and the induced isomorphism commutes: 
$$\xymatrix{
0 \ar[r] &
\wh{\pi}_1(X_n) \ar[r]  \ar[d]^{\cong}_{\varphi} & \wh{\pi}_1(S^3-J) \ar[d]^{\cong}_{\varphi} \ar[r]  
& \Z/n\Z \ar@{=}[d] 
\ar[r] &0
\\ 
0 \ar[r] &
\wh{\pi}_1(Y_n) \ar[r] & \wh{\pi}_1(S^3-K) \ar[r] 
& \Z/n\Z 
\ar[r] &0.\\
}$$ 
Recall that the actions of $s^{\Z/n\Z}$ and $t^{\Z/n\Z}$ are defined by the conjugate actions of lifts of elements. We denote $a^b:=b^{-1}ab$ for elements $a,b$ of a group. 
Consider the induced isomorphism $\varphi: \wh{\pi}_1(X_n)\congto \wh{\pi}_1(Y_n)$. 
Let $x,y$ be lifts of elements of $\wh{H}_1(M_n)$ to $\wh{\pi}_1(X_n)\subset \wh{\pi}_1(S^3-K)$, and let $\varphi^{-1}(r),\varphi^{-1}(u)$ be lifts of elements of $\wh{\pi}_1(S^3-K)^{\rm ab}$ to $\wh{\pi}_1(S^3-K)$ with $r,u\in \wh{\pi}_1(S^3-K)$. Then we have $\varphi(x^{\varphi^{-1}(r)} y^{\varphi^{-1}(u)})=\varphi(x)^r\varphi(y)^u$. Hence any $\bar{x},\bar{y}\in \wh{H}_1(M_n)$ and $\bar{r},\bar{u}\in t^{\Z/n\Z}$ satisfy $\varphi(\bar{r}\bar{x}+\bar{u}\bar{y})=\bar{r}\varphi(\bar{x})+\bar{u}\varphi(\bar{y})$. Thus $\varphi:\wh{H}_1(M_n)\congto \wh{H}_1(N_n)$ is an isomorphism of $\wh{\Z}[t^{\Z/n\Z}]$-modules. \\ 

Let $X_\infty\to S^3-J$ denote the $\Z$-cover. 
The Alexander module $H_1(X_\infty)$ of $J$ is a finitely generated $\Z[s^\Z]$-module with ${\rm Fitt}_{\Z[s^\Z]}H_1(X_\infty)=(\Delta_J(s))$ in $\Z[s^\Z]$. 
Namely, let $\Z[s^\Z]^q\overset{Q}{\To} \Z[s^\Z]^q \to H_1(X_\infty)\to 0$ be a finite presentation (an exact sequence) of the Alexander module with $q\in \N$ and $Q\in {\rm M}_q(\Z[s^\Z])$ (cf.~\cite[Corollary 8.C.4]{Rolfsen1990}). 
Then we have the equality $(\det Q)=(\Delta_J(t))$ of ideals in $\Z[s^{\Z}]$ by the definition of $\Delta_J(t)$. 
For each $n\in \N_{>0}$, the Wang exact sequence yields a well-known isomorphism $H_1(M_n)\cong H_1(X_\infty)/(t^n-1)H_1(X_\infty)$ of $\Z[s^{\Z/n\Z}]$-modules. Therefore we obtain a presentation $\Z[s^{\Z/n}]^q\overset{Q_n}{\To} \Z[s^{\Z/n\Z}]^q \to H_1(M_n)\to 0$ with $Q_n:=Q\mod (s^n-1) \in {\rm M}_q(\Z[t^{\Z/n\Z}])$. 
Hence ${\rm Fitt}_{\Z[s^{\Z/n\Z}]}(H_1(M_n))=
(\det Q_n)=(\Delta_J(s) \mod (s^n-1))$ holds. 
Similarly, ${\rm Fitt}_{\Z[t^{\Z/n\Z}]}(H_1(N_n))=
(\Delta_K(t) \mod (t^n-1))$  holds.

Consider the identification $\wh{\Z}[s^{\Z/n\Z}]\cong \wh{\Z}[t^{\Z/n\Z}]$ given by $s\mapsto t^v=t^{v \mod n}$. 
Then the isomorphism $\wh{H}_1(M_n)\congto \wh{H}_1(N_n)$ of $\wh{\Z}[t^{\Z/n\Z}]$-modules yields 
the equalities $$(\Delta_J(s) \mod (s^n-1))=(\Delta_J(t^v) \mod (t^n-1))=(\Delta_K(t) \mod (t^n-1))$$ of Fitting ideals in $\wh{\Z}[t^{\Z/n\Z}]$. 

In general, for each $f \in \wh{\Z}[[t^{\wh{\Z}}]]$, there is a natural isomorphism $(f)\cong \varprojlim_n (f\mod (t^n-1))$ in $\wh{\Z}[[t^{\wh{\Z}}]]$. 
Indeed, let $K_n$ denote the kernel of the restriction $\mod (t^n-1): (f)\surj (f \mod (t^n-1))$ of the natural surjection for each $n$. 
We may assume that $n$ runs through the ordered subset $\N':=\{m!\mid m\in \N\}$ of $\N$. 
Since $\{K_n\}_n$ is a surjective system, it satisfies the Mittag-Leffler condition and $\varprojlim_n^1 K_n=0$ holds. Together with $\varprojlim_n K_n=0$, the isomorphism is induced (e.g.~\cite[Section 1]{Jannsen1988}). 

Therefore, taking the inverse limit in the equality of the Fitting ideals in $\wh{\Z}[t^{\Z/n\Z}]$, we obtain the equality $(\Delta_J(t^v))=(\Delta_K(t))$ of ideals in $\wh{\Z}[[t^{\wh{\Z}}]]$. 
\end{proof}

In the rest of this section, we verify that the equality of ideals obtained in Lemma \ref{ideals} can be interpreted as that of Fitting ideals of completed Alexander modules. 
Let the notation be as in Lemma \ref{ideals}. We define \emph{the completed Alexander module} of $J\subset S^3$ by $\mca{H}_J:=\varprojlim_n \wh{H}_1(M_n)$. 
\begin{lem} The completed Alexander module $\mca{H}_J$ is a finitely generated $\wh{\Z}[[s^{\wh{\Z}}]]$-module with the Fitting ideal $(\Delta_J(s))$ in $\wh{\Z}[[s^{\wh{\Z}}]]$. 
\end{lem}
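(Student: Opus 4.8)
The plan is to promote the square presentations of the branched-cover homology groups $H_1(M_n)$ used in the proof of Lemma~\ref{ideals} to a single square presentation of $\mca{H}_J$ over $\wh{\Z}[[s^{\wh{\Z}}]]$ obtained by passing to the inverse limit, and then to read off the Fitting ideal. First I would recall from that proof that $H_1(X_\infty)$ has a finite presentation $\Z[s^\Z]^q \overset{Q}{\To} \Z[s^\Z]^q \to H_1(X_\infty)\to 0$ with $Q\in {\rm M}_q(\Z[s^\Z])$ and $(\det Q)=(\Delta_J(s))$ in $\Z[s^\Z]$, and that the Wang exact sequence gives $H_1(M_n)\cong H_1(X_\infty)/(s^n-1)H_1(X_\infty)$, hence a presentation with matrix $Q_n:=Q\bmod (s^n-1)$. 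For $n\mid n'$ we have $(s^{n'}-1)\subset (s^n-1)$ and $Q_{n'}\bmod (s^n-1)=Q_n$, so these presentations form an inverse system with surjective transition maps; exactly as in Lemma~\ref{ideals} I would let $n$ run through the cofinal chain $\N'=\{m!\mid m\in \N\}$. Since each $M_n$ is a closed $3$-manifold, $H_1(M_n)$ is finitely generated Abelian, so $\wh{H}_1(M_n)=H_1(M_n)\otimes_\Z \wh{\Z}$, and applying the right exact functor $-\otimes_\Z\wh{\Z}$ turns the above into a presentation $\wh{\Z}[s^{\Z/n\Z}]^q \overset{Q_n}{\To} \wh{\Z}[s^{\Z/n\Z}]^q \to \wh{H}_1(M_n)\to 0$ of $\wh{\Z}[s^{\Z/n\Z}]$-modules.

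Next I would set $\wh{L}_n:=Q_n\,\wh{\Z}[s^{\Z/n\Z}]^q=\Im Q_n$ and note that $\{\wh{L}_n\}_n$, with the reduction maps, is a surjective inverse system, hence satisfies the Mittag-Leffler condition, so $\varprojlim_n^1 \wh{L}_n=0$. Applying $\varprojlim_n$ to the short exact sequences $0\to \wh{L}_n\to \wh{\Z}[s^{\Z/n\Z}]^q \to \wh{H}_1(M_n)\to 0$, together with $\varprojlim_n \wh{\Z}[s^{\Z/n\Z}]^q=\wh{\Z}[[s^{\wh{\Z}}]]^q$ and $\varprojlim_n \wh{H}_1(M_n)=\mca{H}_J$, then produces a short exact sequence $0\to \varprojlim_n \wh{L}_n \to \wh{\Z}[[s^{\wh{\Z}}]]^q \to \mca{H}_J\to 0$.

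The hard part will be to identify $\varprojlim_n \wh{L}_n$ with the image $Q\,\wh{\Z}[[s^{\wh{\Z}}]]^q$ of left multiplication by $Q$ on $\wh{\Z}[[s^{\wh{\Z}}]]^q$. The inclusion ``$\subset$'' of the image into the inverse limit is immediate, because $Q$ is compatible with the natural projections $\pi_n\colon \wh{\Z}[[s^{\wh{\Z}}]]^q\surj \wh{\Z}[s^{\Z/n\Z}]^q$. For the reverse inclusion I would argue by compactness: given $\ell=(\ell_n)_n\in \varprojlim_n \wh{L}_n$, each $\pi_n^{-1}(Q_n^{-1}(\ell_n))\subset \wh{\Z}[[s^{\wh{\Z}}]]^q$ is nonempty (since $\ell_n\in \Im Q_n$), closed (since $Q_n,\pi_n$ are continuous and $\wh{\Z}[[s^{\wh{\Z}}]]^q$ is Hausdorff), and for $n\mid n'$ these sets are nested, $\pi_{n'}^{-1}(Q_{n'}^{-1}(\ell_{n'}))\subset \pi_n^{-1}(Q_n^{-1}(\ell_n))$, because reducing $Q_{n'}\pi_{n'}(v)=\ell_{n'}$ modulo $(s^n-1)$ gives $Q_n\pi_n(v)=\ell_n$. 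As $\wh{\Z}[[s^{\wh{\Z}}]]^q$ is compact, this nested family of nonempty closed sets has a common point $v$, and then $Qv$ projects to $Q_n\pi_n(v)=\ell_n$ for every $n$, i.e. $Qv=\ell$. This yields a finite square presentation $\wh{\Z}[[s^{\wh{\Z}}]]^q \overset{Q}{\To} \wh{\Z}[[s^{\wh{\Z}}]]^q \to \mca{H}_J\to 0$, so $\mca{H}_J$ is generated by $q$ elements over $\wh{\Z}[[s^{\wh{\Z}}]]$, and ${\rm Fitt}_{\wh{\Z}[[s^{\wh{\Z}}]]}\mca{H}_J=(\det Q)=(\Delta_J(s))$, the last equality holding already in $\Z[s^\Z]\subset \wh{\Z}[[s^{\wh{\Z}}]]$. (Equivalently, the presentation shows $\mca{H}_J\cong H_1(X_\infty)\otimes_{\Z[s^\Z]}\wh{\Z}[[s^{\wh{\Z}}]]$, and the Fitting-ideal statement then also follows from the compatibility of Fitting ideals with base change.)
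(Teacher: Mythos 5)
Your proposal is correct and follows essentially the same route as the paper: both tensor the finite-level presentations $Q_n$ of $H_1(M_n)$ up to $\wh{\Z}[s^{\Z/n\Z}]$, pass to the inverse limit over the cofinal chain $\N'$ to obtain the square presentation $\wh{\Z}[[s^{\wh{\Z}}]]^q\overset{Q}{\To}\wh{\Z}[[s^{\wh{\Z}}]]^q\to \mca{H}_J\to 0$, and read off ${\rm Fitt}=(\det Q)=(\Delta_J(s))$. Your compactness/finite-intersection argument identifying $\varprojlim_n \Im Q_n$ with $Q\,\wh{\Z}[[s^{\wh{\Z}}]]^q$ is just a (slightly more careful) substitute for the paper's appeal to $\varprojlim^1_n \Ker \wh{Q}_n=0$.
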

\begin{proof} 

Let the notation be as in the proof of Lemma \ref{ideals} and consider the finite presentation of $H_1(M_n)$ by $Q_n \in {\rm M}_q(\Z[t^{\Z/n\Z}])$. 
Since $\wh{\Z}$ is flat over $\Z$, the functor $\otimes \wh{\Z}$ is exact for modules. Hence we have a presentation  
$\wh{\Z}[s^{\Z/n\Z}]^q\overset{\wh{Q}_n}{\To} \wh{\Z}[s^{\Z/n\Z}]^q \to \wh{H}_1(M_n)\to 0$ of $\wh{H}_1(M_n)$, where $\wh{Q}_n=Q_n$ as matrices. 

We may assume that $n$ runs through $\N'=\{m!\mid m\in \N\}$. 
Taking the inverse limit, we obtain an exact sequence $\wh{\Z}[[s^{\wh{\Z}}]]^q\overset{\wh{Q}}{\To} \wh{\Z}[[s^{\wh{\Z}}]]^q \to \mca{H}_J\to \varprojlim_n^1 \Ker \wh{Q}_n$ with $\wh{Q}=(Q_n)_n \in {\rm M}_q(\wh{\Z}[[s^{\wh{\Z}}]])$. Since $(\Ker \wh{Q}_n)_n$ is a surjective system, we have $\varprojlim_n^1 \Ker \wh{Q}_n=0$. Thus we obtain a finite presentation 
$$\wh{\Z}[[s^{\wh{\Z}}]]^q\overset{\wh{Q}}{\To} \wh{\Z}[[s^{\wh{\Z}}]]^q \to \mca{H}_J\to0$$ 
of $\mca{H}_J$. 
Therefore $\mca{H}_J$ is a finitely generated $\wh{\Z}[[s^{\wh{\Z}}]]$-module. (This fact can be obtained also in an abstract way by using the topological Nakayama's lemma.)

Since $(\det Q_n)=(\Delta_J(t) \mod (t^n-1))$ in each $\wh{\Z}[s^{\Z/n\Z}]$, 
we obtain the equalities 
${\rm Fitt}_{\wh{\Z}[[s^{\wh{\Z}}]]} \mca{H}_J=(\det \wh{Q})=\varprojlim_n (\det Q_n) =(\Delta_J(s))$ of ideals in $\wh{\Z}[[s^{\wh{\Z}}]]$. 
\end{proof}

The module $\mca{H}_J$ is a $\wh{\Z}[[t^{\wh{\Z}}]]$-module under the identification $\wh{\Z}[[s^{\wh{\Z}}]]\cong \wh{\Z}[[t^{\wh{\Z}}]]; s\mapsto t^v$. 
We put $\mca{H}_K:=\varprojlim_n \wh{H}_1(N_n)$. 
Since the isomorphisms $\wh{H}_1(M_n)\congto \wh{H}_1(N_n)$ of $\wh{\Z}[[t^{\Z/n\Z}]]$-modules are compatible with the inverse systems, 
we obtain an isomorphism $\mca{H}_J\congto \mca{H}_K$ of $\wh{\Z}[[t^{\wh{\Z}}]]$-modules. 
This yields the equality $(\Delta_J(t^v))=(\Delta_K(t))$ of Fitting ideals in $\wh{\Z}[[t^{\wh{\Z}}]]$, 
which coincides with the one we obtained in Lemma \ref{ideals}. 

\section{Proof of Theorem \ref{theorem}} 
\label{ssPrf} 

\begin{proof}[
Proof of Theorem \ref{theorem}] Since the Alexander polynomials of knots are reciprocal up to multiplication by units of $\Z[t^{\Z}]$, the theorem follows immediately from Lemmas \ref{polynomials} and \ref{ideals}. 
\end{proof}

An isomorphism $\varphi: \wh{\pi}_1(S^3-J)\congto \wh{\pi}_1(S^3-K)$ does not necessarily yield an isomorphism 
$\psi: \wh{\pi}_1(S^3-J)^{\rm ab}\congto \wh{\pi}_1(S^3-K)^{\rm ab}$ 
sending the meridian of $J$ to that of $K$. 
For a reciprocal polynomial $g(t)$ in $\Z[t]$ and a unit $v$ of $\wh{\Z}$, the equality $(g(t))=(g(t^v))$ does not necessarily hold. 
Therefore Lemma \ref{ideals} seems to be the best we can say along this direction in Section \ref{ssTop}. 
In addition, even if we have 
an isomorphism $\psi$ and the equality, we still need the algebraic argument in Section \ref{ssAlg} to determine polynomials. 

\red{We finally remark that indeed we only needed an isomorphism of the pro-metabelian completions of knot groups to prove the coincidence of Alexander polynomials.}

We will try to apply our method to twisted Alexander polynomials of knots or invariants of links in our future works. 

\section*{Acknowledgments} 
I would like to express my gratitude to Oliver Br\"aunling, 
\red{Michel Boileau, Stefan Friedl,} 
Jonathan Hillman, Teruhisa Kadokami, Kensaku Kinjo, Tomoki Mihara, Yasushi Mizusawa, Takayuki Morisawa, Hiroaki Nakamura, 
\red{Mark Pengitore, Alan Reid,} 
Yuji Terashima, Lorenzo Traldi, and 
\red{Gareth Wilkes} 
for fruitful conversations. 
I am grateful to the anonymous referees for a lot of helpful feedback. 
I am also grateful to my family and friends for essential support. 
I was partially supported by Grant-in-Aid for JSPS Fellows (
25-2241). 


\bibliographystyle{amsalpha}
\providecommand{\bysame}{\leavevmode\hbox to3em{\hrulefill}\thinspace}
\providecommand{\MR}{\relax\ifhmode\unskip\space\fi MR }
\providecommand{\MRhref}[2]{%
  \href{http://www.ams.org/mathscinet-getitem?mr=#1}{#2}
}
\providecommand{\href}[2]{#2}


\begin{thebibliography}{KMTT17}

\bibitem[AM65]{ArtinMazur1965Annals}
Michael Artin and Barry Mazur, \emph{On periodic points}, Ann. of Math. (2)
  \textbf{81} (1965), 82--99. \MR{0176482}

\bibitem[AM69]{AtiyahMacDonald}
M.~F. Atiyah and I.~G. Macdonald, \emph{Introduction to commutative algebra},
  Addison-Wesley Publishing Co., Reading, Mass.-London-Don Mills, Ont., 1969.
  \MR{0242802}

\bibitem[Asa08]{Asada2008}
Mamoru Asada, \emph{On {G}alois groups of abelian extensions over maximal
  cyclotomic fields}, Tohoku Math. J. (2) \textbf{60} (2008), no.~1, 135--147.
  \MR{2419040}

\bibitem[BF15]{BoileauFriedl2015}
Michel Boileau and Stefan Friedl, \emph{The profinite completion of 3-manifold
  groups, fiberedness and thurston norm}, preprint, arXiv:1505.07799, May 2015.

\bibitem[BF17]{BoileauFriedl2017}
\bysame, \emph{Grothendieck rigidity of 3-manifold groups}, preprint,
  arXiv:1710.02746, October 2017.

\bibitem[BG04]{BridsonGrunewald2004}
Martin~R. Bridson and Fritz~J. Grunewald, \emph{Grothendieck's problems
  concerning profinite completions and representations of groups}, Ann. of
  Math. (2) \textbf{160} (2004), no.~1, 359--373. \MR{2119723}

\bibitem[BH17]{BoileauHillman-arXiv1710}
Michel Boileau and Jonathan Hillman, \emph{On the {T}its alternative for
  ${PD}(3)$ groups}, arXiv:1710.10670, 2017.

\bibitem[BR15]{BridsonReid2015}
Martin~R. Bridson and Alan~W. Reid, \emph{Profinite rigidity, fibering, and the
  figure-eight knot}, preprint, arXiv:1505.07886, May 2015.

\bibitem[Br{\"a}17]{Braunling2017}
Oliver Br{\"a}unling, \emph{Torsion homology growth beyond asymptotics},
  preprint, arXiv:1702.06243, February 2017.

\bibitem[BRW17]{BridsonReidWilton2017}
Martin~R. Bridson, Alan~W. Reid, and Henry Wilton, \emph{Profinite rigidity and
  surface bundles over the circle}, Bull. Lond. Math. Soc. \textbf{49} (2017),
  no.~5, 831--841. \MR{3742450}

\bibitem[BV13]{BV2013}
Nicolas Bergeron and Akshay Venkatesh, \emph{The asymptotic growth of torsion
  homology for arithmetic groups}, J. Inst. Math. Jussieu \textbf{12} (2013),
  no.~2, 391--447. \MR{3028790}

\bibitem[Fox56]{FoxFDC3}
Ralph~H. Fox, \emph{Free differential calculus. {III}. {S}ubgroups}, Ann. of
  Math. (2) \textbf{64} (1956), 407--419. \MR{0095876}

\bibitem[Fox57]{Fox1957}
\bysame, \emph{Covering spaces with singularities}, A symposium in honor of
  {S}. {L}efschetz, Princeton University Press, Princeton, N.J., 1957,
  pp.~243--257. \MR{0123298 (23 \#A626)}

\bibitem[Fri88]{Fried1988}
David Fried, \emph{Cyclic resultants of reciprocal polynomials}, Holomorphic
  dynamics ({M}exico, 1986), Lecture Notes in Math., vol. 1345, Springer,
  Berlin, 1988, pp.~124--128. \MR{980956}

\bibitem[Fun13]{Funar2013}
Louis Funar, \emph{Torus bundles not distinguished by {TQFT} invariants}, Geom.
  Topol. \textbf{17} (2013), no.~4, 2289--2344, With an appendix by Funar and
  Andrei Rapinchuk. \MR{3109869}

\bibitem[FV11]{FriedlVidussi2011}
Stefan Friedl and Stefano Vidussi, \emph{Twisted {A}lexander polynomials detect
  fibered 3-manifolds}, Ann. of Math. (2) \textbf{173} (2011), no.~3,
  1587--1643. \MR{2800721}

\bibitem[Gou97]{Gouvea-padic-2ed}
Fernando~Q. Gouv\^ea, \emph{{$p$}-adic numbers}, second ed., Universitext,
  Springer-Verlag, Berlin, 1997, An introduction. \MR{1488696}

\bibitem[Gro70]{Grothendieck1970profinite}
Alexander Grothendieck, \emph{Repr\'esentations lin\'eaires et compactification
  profinie des groupes discrets}, Manuscripta Math. \textbf{2} (1970),
  375--396. \MR{0262386}

\bibitem[Hem87]{Hempel1987}
John Hempel, \emph{Residual finiteness for {$3$}-manifolds}, Combinatorial
  group theory and topology ({A}lta, {U}tah, 1984), Ann. of Math. Stud., vol.
  111, Princeton Univ. Press, Princeton, NJ, 1987, pp.~379--396. \MR{895623}

\bibitem[Hem14]{Hempel2014}
\bysame, \emph{Some 3-manifold groups with the same finite quotients},
  preprint, arXiv:1409.3509, September 2014.

\bibitem[Hil05]{Hillar2005}
Christopher~J. Hillar, \emph{Cyclic resultants}, J. Symbolic Comput.
  \textbf{39} (2005), no.~6, 653--669. \MR{2167674}

\bibitem[Jan88]{Jannsen1988}
Uwe Jannsen, \emph{Continuous \'etale cohomology}, Math. Ann. \textbf{280}
  (1988), no.~2, 207--245. \MR{929536}

\bibitem[JZ17]{JaikinZapirain2017fibering}
Andrei Jaikin-Zapirain, \emph{Recognition of being fibred for compact
  3-manifolds}, preprint,
  http:{\slash}{\slash}verso.mat.uam.es{\slash}~andrei.jaikin{\slash}preprints{\slash}fibering.pdf,
  June 2017.

\bibitem[KMTT17]{KMTT2017}
Takahiro Kitayama, Masanori Morishita, Ryoto Tange, and Yuji Terashima,
  \emph{On certain \it{{L}}-functions for deformations of knot group
  representations}, Trans. Amer. Math. Soc. (2017), 25 pages.

\bibitem[KS92]{KodamaSakuma1992}
Kouzi Kodama and Makoto Sakuma, \emph{Symmetry groups of prime knots up to
  {$10$} crossings}, Knots 90 ({O}saka, 1990), de Gruyter, Berlin, 1992,
  pp.~323--340. \MR{1177431}

\bibitem[Le16]{Le-vol}
Thang Le, \emph{Growth of homology torsion in finite coverings and hyperbolic
  volume}, preprint, arXiv:1412.7758, 2016.

\bibitem[LR11]{LongReid2011G}
Darren~D. Long and Alan~W. Reid, \emph{Grothendieck's problem for 3-manifold
  groups}, Groups Geom. Dyn. \textbf{5} (2011), no.~2, 479--499. \MR{2782181}

\bibitem[L{\"u}c15]{Luck2015slide}
Wolfgang L{\"u}ck, \emph{Survey on {$L^2$}-torsion}, silde,
  https:{\slash}{\slash}people.math.ethz.ch{\slash}~iozzi{\slash}ventotene2015{\slash}Lueck\_ventotene150911.pdf,
  September 2015.

\bibitem[Maz64]{Mazur1963}
Barry Mazur, \emph{Remarks on the {A}lexander polynomial}, unpublished note,
  http:{\slash}{\slash}www.math.harvard.edu{\slash}\textasciitilde
  mazur{\slash}papers{\slash}alexander\_polynomial.pdf, 1963--64.

\bibitem[Maz12]{Mazur2012}
\bysame, \emph{Primes, {K}nots and {P}o}, Lecture notes for the conference
  ``{G}eometry, {T}opology and {G}roup {T}heory'' in honor of the 80th birthday
  of Valentin Poenaru, July 2012.

\bibitem[Moc15]{MochizukiTAAG3}
Shinichi Mochizuki, \emph{Topics in absolute anabelian geometry {III}: global
  reconstruction algorithms}, J. Math. Sci. Univ. Tokyo \textbf{22} (2015),
  no.~4, 939--1156. \MR{3445958}

\bibitem[Mor12]{Morishita2012}
Masanori Morishita, \emph{Knots and primes}, Universitext, Springer, London,
  2012, An introduction to arithmetic topology. \MR{2905431}

\bibitem[MTTU17]{MTTU}
Masanori Morishita, Yu~Takakura, Yuji Terashima, and Jun Ueki, \emph{On the
  universal deformations for {${\rm SL}_2$}-representations of knot groups},
  Tohoku Math. J. (2) \textbf{69} (2017), no.~1, 67--84. \MR{3640015}

\bibitem[Per74]{Perko1974proc}
Kenneth~A. Perko, Jr., \emph{On the classification of knots}, Proc. Amer. Math.
  Soc. \textbf{45} (1974), 262--266. \MR{0353294}

\bibitem[Per02]{PerelmanGC1}
Grisha Perelman, \emph{The entropy formula for the ricci flow and its geometric
  applications}, preprint, arXiv:0211159, November 2002.

\bibitem[Per03a]{PerelmanGC3}
\bysame, \emph{Finite extinction time for the solutions to the ricci flow on
  certain three-manifolds}, preprint. arXiv:0307245, July 2003.

\bibitem[Per03b]{PerelmanGC2}
\bysame, \emph{Ricci flow with surgery on three-manifolds}, preprint,
  arXiv:0303109, March 2003.

\bibitem[PT86]{PlatonovTavgen1986}
V.~P. Platonov and O.~I. Tavgen', \emph{On the {G}rothendieck problem of
  profinite completions of groups}, Dokl. Akad. Nauk SSSR \textbf{288} (1986),
  no.~5, 1054--1058. \MR{852649}

\bibitem[Rol90]{Rolfsen1990}
Dale Rolfsen, \emph{Knots and links}, Mathematics Lecture Series, vol.~7,
  Publish or Perish, Inc., Houston, TX, 1990, Corrected reprint of the 1976
  original. \MR{1277811 (95c:57018)}

\bibitem[RZ10]{RibesZalesskii2010}
Luis Ribes and Pavel Zalesskii, \emph{Profinite groups}, second ed., Ergebnisse
  der Mathematik und ihrer Grenzgebiete. 3. Folge. A Series of Modern Surveys
  in Mathematics [Results in Mathematics and Related Areas. 3rd Series. A
  Series of Modern Surveys in Mathematics], vol.~40, Springer-Verlag, Berlin,
  2010. \MR{2599132}

\bibitem[Tan17]{TangeRyoto1}
Ryoto Tange, \emph{{F}ox formulas for twisted {A}lexander invariants associated
  to representations of knot groups over rings of {$S$}-integers}, preprint,
  arXiv:, 2017.

\bibitem[Uek18]{Ueki4}
Jun Ueki, \emph{$p$-adic {M}ahler measure and $\mathbb{Z}$-covers of links}, to
  appear in Ergodic Theory Dynam. Systems (2018), 20 pages, (arXiv:1702.03819).

\bibitem[Web79]{Weber1979}
Claude Weber, \emph{Sur une formule de {R}. {H}. {F}ox concernant l'homologie
  des rev\^etements cycliques}, Enseign. Math. (2) \textbf{25} (1979), no.~3-4,
  261--272 (1980). \MR{570312 (81d:57011)}

\bibitem[Wil17a]{Wilkes2017Dedicata}
Gareth Wilkes, \emph{Profinite rigidity for {S}eifert fibre spaces}, Geom.
  Dedicata \textbf{188} (2017), 141--163. \MR{3639628}

\bibitem[Wil17b]{Wilkes-arXiv1710}
\bysame, \emph{Relative cohomology theory for profinite groups},
  arXiv:1710.00730, 2017.

\bibitem[Wil18a]{Wilkes-arXiv1802}
\bysame, \emph{Profinite completions, cohomology and {JSJ} decompositions of
  compact 3-manifolds,}, arXiv:1802.09282, 2018.

\bibitem[Wil18b]{Wilkes2018JA}
\bysame, \emph{Profinite rigidity of graph manifolds and {J}{S}{J}
  decompositions of 3-manifolds}, J. Algebra \textbf{502} (2018), 538--587.

\bibitem[Wil18c]{Wilkes-arXiv1801}
\bysame, \emph{Profinite rigidity of graph manifolds, {II}: knots and mapping
  classes}, arXiv:1801.06386, 2018.

\bibitem[WZ10]{WiltonZalesskii2010}
Henry Wilton and Pavel Zalesskii, \emph{Profinite properties of graph
  manifolds}, Geom. Dedicata \textbf{147} (2010), 29--45. \MR{2660565}

\bibitem[WZ17a]{WiltonZalesskii2014}
\bysame, \emph{Distinguishing geometries using finite quotients}, Geom. Topol.
  \textbf{21} (2017), no.~1, 345--384. \MR{3608716}

\bibitem[WZ17b]{WiltonZalesskii-arXiv1703}
\bysame, \emph{Profinite detection of 3-manifold decompositions},
  arXiv:1703.03701, 2017.

\end{thebibliography}

\ \\
\noindent 
Jun Ueki \\
Graduate School of Mathematical Sciences, The University of Tokyo\\
3-8-1, Komaba, Meguro-ku, Tokyo, 153-8914, Japan \\ 
E-mail: \url{uekijun46@gmail.com} 

\end{document}